\documentclass[11pt,a4paper]{article}

\usepackage{amsmath,amsfonts,amssymb,latexsym,graphics,epsfig,url}
\usepackage[dvipsnames]{xcolor}
\usepackage[normalem]{ulem}
\usepackage{amsthm}
\usepackage[english]{babel}
\usepackage{graphicx,tikz}
\usetikzlibrary{decorations.markings}
\usepackage{soul}
\usepackage{cancel}
\usepackage[utf8]{inputenc}
\usepackage[T1]{fontenc}
\usepackage{hyperref} 
\hypersetup{colorlinks,
    linkcolor=blue, 
    anchorcolor=blue,
    citecolor=blue}
\tikzstyle{directed}=[postaction={decorate,
	decoration={markings,mark=at position 0.6 with {\arrow{latex}}}
}]
\tikzstyle{vertex}=[circle, draw, inner sep=0pt, minimum size=3pt]
\newcommand{\vertex}{\node[vertex]}

\newtheorem{theorem}{Theorem}[section]
\newtheorem{lemma}[theorem]{Lemma}

\newtheorem{proposition}[theorem]{Proposition}

\newtheorem{example}[theorem]{Example}

\DeclareMathOperator{\tr}{tr}
\DeclareMathOperator{\spec}{sp}

\def\Q{\ns Q}

\def\vec0{\mbox{\boldmath $0$}}

\def\A{\mbox{\boldmath $A$}}

\def\I{\mbox{\boldmath $I$}}

\def\I{\mbox{\boldmath $I$}}

\def\Q{\mbox{\boldmath $Q$}}

\def\AA{\mbox{$\mathcal{A}$}}

\def\blue{\textcolor{blue}}

\begin{document}

\title{On the Spectra of Generalized Token Graphs of Stars
\thanks{The research of the first and third authors was funded by AGAUR from the Catalan Government under project 2021SGR00434 and MICINN from the Spanish Government under project PID2020-115442RB-I00.
The third author's research is also supported by a grant from the Universitat Polit\`ecnica de Catalunya, reference AGRUPS-2025.
B. Martínez-Barona acknowledges financial support from the Mexican Government through the SECIHTI Postdoctoral Fellowship.}}

\author{C. Dalf\'o$^a$, M. A. Fiol$^b$, B. Mart\'inez-Barona$^c$, X. Song$^d$\\ 
$^a${\small Departament de Matem\`atica, Universitat de Lleida} \\
		{\small Igualada (Barcelona), Catalonia, {\tt cristina.dalfo@udl.cat}}\\
$^b${\small Departament de Matem\`atiques, Universitat Polit\`ecnica de Catalunya} \\
    	{\small Barcelona Graduate School, Institut de Matem\`atiques de la UPC-BarcelonaTech}\\
    	{\small (IMTech) Barcelona, Catalonia, {\tt miguel.angel.fiol@upc.edu}}\\
$^c$ {\small Department of Applied Mathematics and Systems, Universidad Autónoma Metropolitana}\\ 
        {\small Cuajimalpa, Mexico City, Mexico, {\tt berenice.barona@cua.uam.mx}}\\
$^d${\small School of Humanities and Fundamental Sciences, Shenzhen University of Information}\\
{\small Technology, Shenzhen, Guangdong, China, \texttt{xdsong@suit-sz.edu.cn}}
}
\date{}
\maketitle

\begin{abstract}
In the generalization of token graphs (or supertoken graphs), the vertices of a $k$-token graph of a graph $G$ correspond to $k$ distinguishable or indistinguishable tokens placed on $k$ different or equal vertices of $G$. In several cases, we provide the whole spectrum of a supertoken graph of a star by analyzing its regular partitions.
\end{abstract}

\noindent{\em Keyword:} Supertoken graph, regular partition, adjacency spectrum, star graph.

\noindent{\em MSC2010:} 05C15, 05C10, 05C50.


\section{Preliminaries and definitions}


Given some integers $n,k$, we denote by $C^n_k$ (with $k\le n$) and $C\!R^n_k$, respectively, the sets of combinations and multisets with repetition of $n$ elements taken $k$ at a time. Recall that the cardinalities of both sets are
$|C^n_k|=\binom{n}{k}$ and $|C\!R^n_k|=\binom{n+k-1}{k}$.

Let $G=(V,E)$ be a graph on $n$ vertices. Let $k$ and $s$ be positive integers such that $s\le k$.
Then,  the {\it $k$-supertoken} $F_{k}^s(G)$ [or $F^s_{k\times 1}(G)$] is the graph in which each vertex corresponds to a distribution of $k$ equal [or different] tokens between the $n$ vertices of $G$, and in such a way that no vertex can receive more than $s$ tokens.
Thus, every vertex of the $k$-supertoken corresponds to a multiset of unlabeled [or labeled] vertices of $G$.
Moreover, two vertices of $F_{k}^s(G)$ [or $F^s_{k\times 1}(G)$] are adjacent if the symmetric difference of their corresponding multisets 
is a pair of adjacent vertices $u,v\in V$. In other words, each vertex of the $k$-supertoken  is defined by the multiset of vertices of $G$ having a token, and we can go from a vertex to an adjacent one in $F_{k}^s(G)$ [or $F^s_{k\times 1}(G)$] by simply moving, in $G$, one token to a feasible adjacent vertex.


Supertoken graphs have been widely applied across computer science, physics, chemistry, and related fields. 
For example, moving tokens along the edges of a given graph to reach a final configuration is a class of reconfiguration problems in computer science; see Bonnet, Miltzow, and Rz\k{a}\.{z}ewski \cite{BMR2018}. In physical and chemical applications, a class of supertoken graphs is related to the exchange Hamiltonians in quantum mechanics; see Audenaert, Godsil, Royle, and Rudolph \cite{agrr07}. Besides, the minimum cycle basis construction of a class of supertokens may be used to confirm that state-dependent coupling of automata in such a way that it does not violate the principle of microscopic reversibility, see Hammack and Smith \cite{HaSm17}.

Observe that 
\begin{equation}\label{eq:subset1}
    F^1_k(G)\subseteq F_k^2(G)\subseteq \cdots \subseteq F_k^k(G)
\end{equation}
and 
\begin{equation}\label{eq:subset2}
    F^{1}_{k\times 1}(G)\subseteq F^{2}_{k\times 1}(G)\subseteq \cdots\subseteq F^{k}_{k\times 1}(G).
\end{equation}

In Table \ref{taula:casos-part}, we summarize all the cases of token (or supertoken) graphs considered in this paper.

\begin{table}[t]
\begin{center}
\begin{tabular}{|c||c|c|c|c|c|}
  \hline
  $s\setminus$ {\rm tokens} & all equal & all different                                      \\[.1cm]
  \hline\hline
  $1$              & $F_k^1(G)=F_k(G)$       & $F_{k\times 1}^1(G)$                                   \\[.1cm]
                   & Token graph or symmetric power           &                                                           \\[.1cm]
  \hline
  $1<s<k$          & $F_k^s(G)$          & $F_{k\times 1}^s(G)$                                   \\[.1cm]
  \hline
  $k$              & $F_k^k(G)$          & $F_{k\times 1}^k(G)=G\Box\stackrel{(k)}{\cdots}\Box G$ \\[.1cm]
                   & Reduced power                            & Cartesian product                                          \\[.1cm]
  \hline
\end{tabular}
\caption{Particular cases of $k$-supertoken graphs.}
\label{taula:casos-part}
\end{center}
\end{table}

\section{The token graphs $F_k(G)$}
\label{sec:token}
In this case, we deal with the standard $k$-token graph, so the $k$ tokens are indistinguishable, and the maximum number of tokens per vertex is one.
If $G$ has $n$ vertices and $m$ edges, then the token graph $F^{1}_{k}(G)=F_k(G)$
has order $\binom{n}{k}$ and size $\binom{n-2}{k-1} m$. 
The graph $F_k(G)$ is the known {\em $k$-th symmetric power} of $G$ (see Audenaert, Godsil, Royle, and Rudolph~\cite{agrr07}), later renamed {\em $k$-token graph} of $G$ by Fabila-Monroy, Flores-Pe\~{n}aloza, Huemer, Hurtado, Urrutia, and Wood~\cite{ffhhuw12}. 

In particular, when $G$ is the complete graph $K_n$, the token graph $F_k(K_n)$ is the distance-regular graph known as the {\em Johnson graph} $J(n,k)$, which is closely related to some issues of coding theory.
See, for instance, Godsil \cite{Go93}. 
Some properties of $J(n,k)$ are the following:
\begin{itemize}
    \item 
    $J(n,k)$ has $\binom{n}{k}$ vertices and 
    $\binom{n-2}{k-1}\frac{n(n-1)}{2}=\frac{1}{2}k(n-k)\binom{n}{k}$ edges.
    \item 
    $J(n,k)$ has diameter $D=\min\{k,n-k\}$.
     \item 
    The eigenvalues of $J(n,k)$
    are $\lambda_j=(k-j)(n-k-j)-j$ with multiplicities $m_j=\binom{n}{j}-\binom{n}{j-1}$ for $j=0,1,\ldots,D$.
\end{itemize}

More results on token graphs can be found in papers by Lea\~{n}os and Ndjatchi \cite{LN2021}, 
Lea\~{n}os and Trujillo-Negrete \cite{LT2018}, Dalf\'o, Duque, Fabila-Monroy, Fiol, Huemer,  Trujillo-Negrete, and Zaragoza Mart\'inez \cite{ddffhtz2021}, and Barik and Verma \cite{bv24}.

\subsection{The 2-token graph $F_2(G)$ of a star}
\label{subsec:2-token}

In the following, we give some properties of the graphs $F_2(S_n)$, where $S_n$ stands for the star with $n(\ge 2$) leaves (so, with order $n+1$). See Figure \ref{fig:S4+F2(S4)} for $S_4$ and $F_2(S_4)$.

\begin{figure}[t]
    \begin{center}
  \includegraphics[width=8cm]{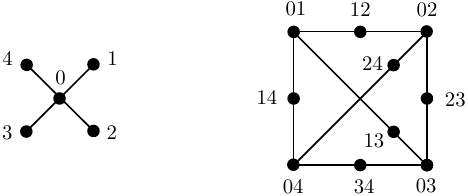}
  \end{center}
  \caption{The star $S_4$ and its 2-token graph $F_2(S_4)$, where a vertex $ab$ corresponds to the set $\{a,b\}$.}
  \label{fig:S4+F2(S4)}
  \end{figure}

Some simple properties of these graphs are the following:
\begin{itemize}
\item[$\circ$]
For $n\ge 3$, $F_2(S_{n-1}) \subset F_2(S_{n}$).

\item[$\circ$]
$F_2(S_n)$ has $\binom{n+1}{2}$ vertices and $n(n-1)$ edges.

\item[$\circ$]
$F_2(S_n)$ is a bipartite graph.

\item[$\circ$]
If $n>3$, then $F_2(S_n)$ has diameter $D=4$.
\end{itemize}

We recall the following result by Cvetkovi\'{c}~\cite{Cv75}, since it is used in the proof of Proposition~\ref{prop:sp-F_2^1(S_n)}.

\begin{theorem}[\cite{Cv75}]
\label{thm:Cv}
If $G$ is an $r$-regular graph with $n$ vertices and $m\big(=\frac{1}{2}nr\big)$ edges, then
$$
P_{S(G)}(z) = z^{m-n} P_{G}(z^2-r),
$$
where $P_{G}(z)$ is the characteristic polynomial of $G$, and $S(G)$ is the subdivison graph of $G$, obtained by adding a new vertex on every edge of $G$.
\end{theorem}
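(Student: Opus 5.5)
The plan is to work directly with the vertex--edge incidence matrix of $G$. Let $V=\{v_1,\ldots,v_n\}$ and $E=\{e_1,\ldots,e_m\}$. Let $\mbox{\boldmath $B$}$ be the $n\times m$ (unsigned) incidence matrix, with $\mbox{\boldmath $B$}_{ij}=1$ if $v_i\in e_j$ and $0$ otherwise. The subdivision graph $S(G)$ has vertex set $V\cup E$, where each new vertex $e_j$ is adjacent exactly to the two endpoints of $e_j$. Ordering the vertices of $S(G)$ as $V$ first and then $E$, its adjacency matrix is
$$
\A(S(G))=\begin{pmatrix} \mbox{\boldmath $O$} & \mbox{\boldmath $B$}\\ \mbox{\boldmath $B$}^{\top} & \mbox{\boldmath $O$}\end{pmatrix}.
$$
The key identity is $\mbox{\boldmath $B$}\mbox{\boldmath $B$}^{\top}=\mbox{\boldmath $D$}+\A(G)$, where $\mbox{\boldmath $D$}$ is the diagonal degree matrix. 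To check it, note that the $(i,i)$ entry counts the edges at $v_i$. The $(i,k)$ entry with $i\ne k$ counts the edges containing both $v_i$ and $v_k$, which is $1$ or $0$ since $G$ is simple. Since $G$ is $r$-regular, this gives $\mbox{\boldmath $B$}\mbox{\boldmath $B$}^{\top}=r\I+\A(G)$.

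Next I would compute the characteristic polynomial using a Schur complement. For $z\neq 0$, eliminating the lower-right block $z\I_m$ gives
$$
\det\begin{pmatrix} z\I_n & -\mbox{\boldmath $B$}\\ -\mbox{\boldmath $B$}^{\top} & z\I_m\end{pmatrix}
= z^{m}\det\!\left(z\I_n-\tfrac{1}{z}\mbox{\boldmath $B$}\mbox{\boldmath $B$}^{\top}\right)
= z^{m-n}\det\!\left(z^2\I_n-\mbox{\boldmath $B$}\mbox{\boldmath $B$}^{\top}\right).
$$
Substituting the key identity yields
$$
P_{S(G)}(z)=z^{m-n}\det\big((z^2-r)\I_n-\A(G)\big)=z^{m-n}P_G(z^2-r)
$$
for every $z\neq0$.

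The only step needing care is extending this from $z\ne 0$ to a polynomial identity. When $m\ge n$, which holds whenever $r\ge 2$, both sides are polynomials that agree on infinitely many values of $z$, so they are equal. When $r=1$, $G$ is a perfect matching, $m=n/2$, and the exponent $m-n$ is negative. In that case I would read the identity as one between rational functions. Equivalently, one can observe that $P_G(z^2-1)=((z^2-1)^2-1)^{m}=z^{2m}(z^2-2)^m$ is divisible by $z^{n-m}=z^{m}$, so the right-hand side is again a polynomial and the same argument applies. One could also avoid the issue altogether by using the standard fact that $\det(z\I_m-\mbox{\boldmath $B$}^{\top}\mbox{\boldmath $B$})=z^{m-n}\det(z\I_n-\mbox{\boldmath $B$}\mbox{\boldmath $B$}^{\top})$.
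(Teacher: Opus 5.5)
Your proof is correct. The paper gives no proof of this result; it quotes Cvetković and applies the formula to $S(K_n)$. Your argument is the standard one behind the cited theorem:
\begin{itemize}
\item write $A(S(G))$ in block form using the unsigned incidence matrix $B$;
\item use $BB^\top = rI + A(G)$;
\item evaluate the bipartite determinant by a Schur complement.
\end{itemize}

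Two minor remarks, neither affecting validity. First, you skip the degenerate case $r=0$. There $m=0$ and $S(G)=G$, and both sides equal $z^n$.

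Second, no case analysis on $r$ is actually needed for the extension from $z\neq 0$. Two rational functions that agree for all $z\neq 0$ are equal, and since the left-hand side is a polynomial, so is the right-hand side. Your closing ``standard fact'' $\det(zI_m-B^\top B)=z^{m-n}\det(zI_n-BB^\top)$ does not really sidestep the negative exponent. When $m<n$ it is itself only a rational-function identity.
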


\begin{proposition}
\label{prop:sp-F_2^1(S_n)}
The adjacency spectrum of $F_2(S_n)$, for $n\ge 3$, is
$$
\spec F_2(S_n)=\left\{\sqrt{2(n-1)}^{\,1},\sqrt{n-2}^{\,n-1}, 0^{\,\binom{n}{2}-n},-\sqrt{n-2}^{\,n-1},-\sqrt{2(n-1)}^{\,1}\right\}.
$$
Moreover, the vertex set of $F_2(S_n)$ admits a regular partition.
\end{proposition}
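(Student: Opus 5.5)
The plan is to identify $F_2(S_n)$ with the subdivision graph of the complete graph $K_n$, and then apply Theorem~\ref{thm:Cv}.

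Label the center of $S_n$ by $0$ and the leaves by $1,\ldots,n$. The vertices of $F_2(S_n)$ split into two classes:
\[
V_1=\{\{0,i\}:1\le i\le n\}, \qquad V_2=\{\{i,j\}:1\le i<j\le n\}.
\]
I would check the adjacencies directly from the definition. A token can only move along an edge $0\,\ell$ of $S_n$. From $\{0,i\}$ the only moves take the token at $0$ to some leaf $j\neq i$, which gives $\{i,j\}$. The token at $i$ can only move to $0$, which is occupied. From $\{i,j\}$ the only moves send $i$ or $j$ to $0$, which gives $\{0,j\}$ or $\{0,i\}$. Hence $V_1$ and $V_2$ are independent sets, and $\{i,j\}$ is adjacent exactly to $\{0,i\}$ and $\{0,j\}$. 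So $F_2(S_n)\cong S(K_n)$: the vertices of $V_1$ play the role of the vertices of $K_n$, and the vertices of $V_2$ are the subdivision vertices of its edges.

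Since $K_n$ is $(n-1)$-regular with $m=\binom n2$ edges and characteristic polynomial $P_{K_n}(x)=(x-n+1)(x+1)^{n-1}$, Theorem~\ref{thm:Cv} gives
\[
P_{F_2(S_n)}(z)=z^{\binom n2-n}\,\bigl(z^2-2(n-1)\bigr)\,\bigl(z^2-(n-2)\bigr)^{n-1}.
\]
This yields exactly the stated spectrum. The eigenvalues $\pm\sqrt{2(n-1)}$ are simple, $\pm\sqrt{n-2}$ each have multiplicity $n-1$, and $0$ has multiplicity $\binom n2-n$. The hypothesis $n\ge3$ is what makes $\sqrt{n-2}\neq0$ and $\binom n2-n\ge0$, so that the listed eigenvalues and multiplicities are as written. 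As a sanity check, the multiplicities sum to $\binom{n+1}{2}$.

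For the regular partition I would take $\{V_1,V_2\}$. Every vertex of $V_1$ has $0$ neighbors in $V_1$ and $n-1$ in $V_2$. Every vertex of $V_2$ has $2$ neighbors in $V_1$ and $0$ in $V_2$. The quotient matrix is therefore $\begin{pmatrix}0&n-1\\2&0\end{pmatrix}$, whose eigenvalues $\pm\sqrt{2(n-1)}$ agree with the extreme eigenvalues found above, as they must. The only step needing real care is the adjacency verification establishing the isomorphism with $S(K_n)$; the rest is a direct substitution.
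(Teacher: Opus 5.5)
Your proof is correct and is essentially the paper's argument: the isomorphism $F_2(S_n)\cong S(K_n)$, Cvetkovi\'c's theorem applied to the $(n-1)$-regular graph $K_n$, and the two-class regular partition with quotient matrix $\begin{pmatrix}0&n-1\\2&0\end{pmatrix}$. Your explicit check of the adjacencies makes the isomorphism with $S(K_n)$ rigorous, whereas the paper only asserts it.
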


\begin{proof}
Let $V(K_n)=\{1,\ldots,n\}$ and $V(F_2(S_n))=\{(0,1),\ldots,(0,n)\}\cup\{(i,j):i< j$  and $i,j\in [n]\}$, where $[n]=\{1,2,\ldots,n\}$. Denote by $ij$ the new vertex on the edge $ij$ of $K_n$. 
Note that there is a natural one-to-one mapping between $\{1,\ldots,n\}$ and $\{(0,1),\ldots,(0,n)\}$, and between $ij$ and $(i,j)$ for $i\ne j$ and $i,j\in [n]$. Consequently, $F_2(S_n)\cong S(K_{n})$.
From the known characteristic polynomial of the complete graph $K_{n}$, we compute $P_{F_2(S_n)}(z)$.
\begin{align*}
 P_{K_{n}}(z) &= (z+1)^{n-1}(z-n+1),\\
  P_{K_{n}}(z^2-n+1) &= (z^2-n+2)^{n-1}(z^2-2n+2),\\
   P_{F_2(S_n)}(z) &= P_{S(K_{n})}(z) = z^{n(n-1)/2-n}P_{K_{n}}(z^2-n+1)\\
                     &= z^{(n^2-3n)/2}(z^2-n+2)^{n-1}(z^2-2n+2).
\end{align*}
From $P_{F_2(S_n)}(z)$, we obtained the claimed spectrum. Note that, for $n=3$, the multiplicity of the eigenvalue 0 is 0, which means that there is no such eigenvalue.

Moreover, the vertex set of $F_2(S_n)$ admits the regular partition given by the intersection diagram represented in Figure~\ref{fig:quotients}, where we have two kinds of vertices: $n$ vertices corresponding to having one token in a leaf and the other in the center of the star (named vertex 0), 
and $\binom{n}{2}$ vertices with the tokens in the leaves. 
(The different classes of vertices are illustrated for the case $n=4$.)
Some eigenvalues of $F_2(S_n)$ are obtained from the quotient matrix of its corresponding regular partition.
The quotient matrix and its spectrum are
\[
  \Q(F_2(S_n)) = \left(
  \begin{array}{cc}
    0 & n-1 \\
    2 & 0
  \end{array}
  \right),
  \qquad \spec \Q(F_2(S_n))
  =\left\{\pm\sqrt{2(n-1)}^{\,1}\right\}.
\qedhere\]
\end{proof}

\begin{figure}[t]
    \begin{center}
  \includegraphics[width=12cm]{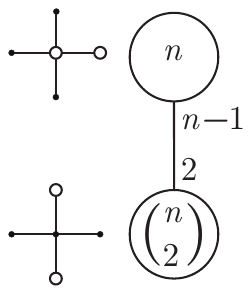}
  \end{center}
  \vskip-13cm
  \caption{Regular partition of $F_2(S_n)$.}
  \label{fig:quotients}
  \end{figure}


\subsection{The $k$-token graph of the star $S_{2k-1}$}
\label{subsec:k-token}
Let $X$ be a set of cardinality $2k -1$. The {\em doubled Odd graph} $2O_k$ on $X$ is the graph 
whose vertices are the $k$-subsets and $(k-1)$-subsets of $X$, where two distinct vertices $u,v$ are adjacent if and only if $u\subset v$ or $v\subset u$.
The double Odd graph $2O_k$ is also known as the {\em middle cube graph} of parameter $k$, here denoted by $Q_{k-1}'$. This is because $2O_k$ is isomorphic to the subgraph of the binary Hamming graph $H(2k-1,2)$, or $(2k-1)$-cube $Q_{2k-1}$, induced by the set of vertices whose binary representation has either $k-1$ or $k$ ones.
%
In \cite{dfm2015}, Dalf\'o, Fiol, and Mitjana studied some of the properties of $Q_{k-1}'$ in the
light of the theory of distance-regular graphs. In particular, they completely determined their eigenvalues and associated eigenvectors. The middle cube graphs are also an example of the so-called `boundary graphs' satisfying a limit case of a property of the diameter of a graph in terms of its spectrum (see Fiol, Garriga, and Yebra \cite{fgy2001}).

Some  properties of the graphs $F_k(S_{2k-1})$ are the following:
\begin{itemize}
\item[$\circ$]
$F_k(S_{2k-1})$ is a $k$-regular graph with $2\binom{2k-1}{k}$ vertices and  $(2k-1)\binom{2k-2}{k-1}$ edges.
\item[$\circ$]
$F_k(S_{2k-1})$ has diameter $D$ and radius $r$ given by $D=r=2k-1$.
\end{itemize}
For an example, see $F_3(S_5)$ in Figure \ref{fig:F3(S5)}.
\begin{proposition}[\cite{bcn1989}]
The $k$-token graph  $F_k(S_{2k-1})$ of the star $S_{2k-1}$ is a bipartite antipodal distance-regular graph with intersection array 
\begin{align*}
\{b_0,b_1,\ldots,b_{2k-2};c_1,c_2,\ldots,c_{2k-1} \} & = \\
 & \hskip -1cm\{k,k-1,k-1,\ldots\ldots,1,1;1,1,\ldots,k-1,k-1,k \}.
\end{align*} 
That is,
\begin{align*}
b_0&=k, \quad b_{2i-1}=b_{2i}= k-i,\quad i=1,\ldots,k-1.\\
c_{2k-1}&=k, \quad c_{2k-2i}=c_{2k-2i-1}= k-i,\quad i=1,\ldots,k-1.
\end{align*}
\end{proposition}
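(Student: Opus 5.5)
The plan is to identify $F_k(S_{2k-1})$ with the doubled Odd graph $2O_k$ and then read off the intersection array from the combinatorics of set inclusion.

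\emph{Step 1: the isomorphism.} Label the center of the star $0$ and the leaves by $X=\{1,\ldots,2k-1\}$. A vertex of $F_k(S_{2k-1})$ is a $k$-subset of $\{0\}\cup X$. It either avoids $0$, giving a $k$-subset $A\subseteq X$, or contains $0$, giving $\{0\}\cup B$ with $B$ a $(k-1)$-subset of $X$. Every edge of $S_{2k-1}$ joins $0$ to a leaf. So a token move either takes a token from the center to a free leaf or from a leaf to the free center. In both cases the move joins $\{0\}\cup B$ to $A$ exactly when $B\subset A$. Hence the map $A\mapsto A$, $\{0\}\cup B\mapsto B$ is an isomorphism onto $2O_k=Q'_{k-1}$. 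This already gives bipartiteness, $k$-regularity (a $k$-set has $k$ subsets of size $k-1$, and a $(k-1)$-set has $(2k-1)-(k-1)=k$ supersets of size $k$), and the vertex count.

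\emph{Step 2: the distance formula.} For two vertices $U,W$ of $2O_k$, viewed as subsets of $X$, I will show
\[
\partial(U,W)=|U\triangle W| .
\]
The upper bound comes from the fact that each edge changes the symmetric difference by exactly one, since it adds or removes one element. For the lower bound, construct a path greedily, alternately deleting an element of $U\setminus W$ and adding an element of $W\setminus U$ as sizes permit. This needs a small case check when one of the two differences is exhausted: use a "pivot" element outside $U\cup W$, or inside $U\cap W$, and spend two steps. The spare element needed is available because $|X|=2k-1$. This is the step I expect to be the main obstacle, because the equality must hold uniformly, including for antipodal pairs with $U\cup W=X$. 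If the pivot trick breaks there, I would instead compute distance-$i$ sets explicitly.

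\emph{Step 3: the intersection numbers.} Fix $U$ to be a $k$-set; by vertex transitivity under $\mathrm{Sym}(X)$ together with complementation, which swaps the two sides, this suffices. Then $W$ at distance $i$ satisfies $|U\triangle W|=i$.
\begin{itemize}
\item For $i=2j$ even, $W$ is a $k$-set with $|U\setminus W|=|W\setminus U|=j$. Its neighbors closer to $U$ are obtained by deleting an element of $W\setminus U$, so $c_{2j}=j$. Its neighbors farther from $U$ are obtained by deleting an element of $W\cap U$, so $b_{2j}=k-j$.
\item For $i=2j-1$ odd, $W$ is a $(k-1)$-set with $|U\setminus W|=j$ and $|W\setminus U|=j-1$. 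Adding an element of $U\setminus W$ moves closer, so $c_{2j-1}=j$. Adding an element outside $U\cup W$ moves farther, and there are $(2k-1)-(k+j-1)=k-j$ such elements, so $b_{2j-1}=k-j$.
\end{itemize}
These values match the stated array: $b_{2i-1}=b_{2i}=k-i$ and $c_{2i-1}=c_{2i}=i$. The diameter is $2k-1$, attained by $W=X\setminus U$.

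\emph{Step 4: antipodality.} The vertex at distance $2k-1$ from $U$ is unique, namely the complement $X\setminus U$. Being at maximal distance is therefore the equivalence relation that pairs each set with its complement, so the graph is antipodal with classes of size $2$.
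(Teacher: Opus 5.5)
Your proposal is correct and follows the same route as the paper, namely the isomorphism $F_k(S_{2k-1})\cong 2O_k$. The paper then simply invokes the known intersection array of the doubled Odd graph from \cite{bcn1989}, whereas your Steps 2--4 verify it directly, and that verification is sound. The only blemishes are in Step 2: the words `upper' and `lower' bound are swapped, and the pivot is never needed. Indeed, at a $k$-set $Z\neq W$ one always has $Z\setminus W\neq\emptyset$, and at a $(k-1)$-set $Z\neq W$ one always has $W\setminus Z\neq\emptyset$, so the alternating path always has length exactly $|U\triangle W|$, antipodal pairs included.
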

\begin{proof}
This is a direct consequence of the isomorphism between
$F_k(S_{2k-1})$ and the double Odd graph $2O_k$. Indeed, adjacent vertices of $F_k(S_{2k-1})$ are of the form $\{1,2,\ldots, k\}$ and $\{0,2,3,\ldots, k\}$, which in $2O_k$ correspond to the vertices $u=\{1,2,\ldots, k\}$ and $v=\{2,3,\ldots, k\}$.
  The antipodal vertices, at distance $D=2k-1$, are of the form $\{1,2,\ldots, k\}$ and $\{0,k+1,\ldots,2k-1\}$. 
\end{proof}
By considering again the isomorphism between $F_k(S_{2k-1})$ and $2O_k\cong Q_{k-1}'$, we have the following result, cited 
in Brouwer, Cohen and Neumanier \cite[p. 414]{bcn1989}.
For completeness, we prove it using the spectrum of the Johnson graph $J(2k-1,k)$.
\begin{proposition}
\label{propo:F_k(S_2k-1}
The eigenvalues and multiplicites of $F_k(S_{2k-1})$
are
\begin{equation}
\label{eigenvals+mult}
\theta_{j}=\pm 
(k-j), 
\quad m_{j}=\binom{2k-1}{j}-\binom{2k-1}{j-1},\quad j=0,1,\ldots, k-1.
\end{equation}
\end{proposition}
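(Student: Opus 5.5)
Since $F_k(S_{2k-1})\cong 2O_k$ is bipartite, I will take its two colour classes to be $V_1$, the $k$-subsets of $X=[2k-1]$, and $V_0$, the $(k-1)$-subsets of $X$ (via the isomorphism in the previous proposition, $V_0$ corresponds to the configurations with a token on the centre). With this ordering the adjacency matrix is
$$A=\begin{pmatrix}0 & N\\ N^{\top} & 0\end{pmatrix},$$
where $N$ is the $\binom{2k-1}{k}\times\binom{2k-1}{k-1}$ inclusion matrix. The nonzero eigenvalues of $A$ are then $\pm\sqrt{\mu}$, where $\mu$ runs over the nonzero eigenvalues of $NN^{\top}$, with the same multiplicities. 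Also $|V_0|=|V_1|$, since $\binom{2k-1}{k}=\binom{2k-1}{k-1}$.

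The key step is to relate $NN^{\top}$ to the Johnson graph $J(2k-1,k)$. The entry $(NN^{\top})_{uv}$ counts the $(k-1)$-subsets contained in both $u$ and $v$. This count is $k$ if $u=v$, it is $1$ if $|u\cap v|=k-1$ (that is, $u\sim v$ in $J(2k-1,k)$), and it is $0$ otherwise. Hence
$$NN^{\top}=kI+A(J(2k-1,k)).$$
Now apply the spectrum of $J(n,k)$ recalled in Section~\ref{sec:token}, with $n=2k-1$ and $D=\min\{k,k-1\}=k-1$. Its eigenvalues are $\lambda_j=(k-j)(k-1-j)-j$ for $j=0,\dots,k-1$, with multiplicities $\binom{2k-1}{j}-\binom{2k-1}{j-1}$. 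Therefore
$$\mu_j=k+\lambda_j=(k-j)(k-1-j)+(k-j)=(k-j)^2 .$$
All the $\mu_j$ are positive for $j\le k-1$, so $NN^{\top}$ is nonsingular. Consequently $A$ has eigenvalues $\pm(k-j)$, each with multiplicity $m_j$, and no zero eigenvalue.

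It remains to check the count. Since $\sum_j m_j=\binom{2k-1}{k-1}$ telescopes, the eigenvalues listed account for $2\binom{2k-1}{k}$ eigenvalues, which is the full order of the graph. For the eigenvalue $-\sqrt{\mu}$, the eigenvectors are $(x,-N^{\top}x/\sqrt{\mu})$, where $x$ is an eigenvector of $NN^{\top}$; this gives the multiplicity claim rigorously.

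The only real obstacle is the identity $NN^{\top}=kI+A(J)$ together with the simplification $k+\lambda_j=(k-j)^2$. Both are direct, so the argument should be routine once the bipartite block structure is set up.
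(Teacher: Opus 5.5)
Your proof is correct and is essentially the paper's argument. The paper uses $\A^2=k\I+\A_2$, where the distance-2 graph $G_2$ is two copies of $J(2k-1,k)$, and your identity $NN^{\top}=k\I+A(J(2k-1,k))$ is exactly one diagonal block of that relation. Your block formulation, with the explicit eigenvectors $(x,\pm N^{\top}x/\sqrt{\mu})$, makes rigorous the split of the multiplicity $2m_j$ of $(k-j)^2$ into $m_j$ for each sign, which the paper leaves implicit via bipartiteness.
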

\begin{proof}
Let $\A$ be the adjacency matrix of $F_k(S_{2k-1})$. Then, its distance-2 graph $G_2$ has adjacency matrix $\A_2=\A^2-k\I$. The graph $G_2$ consists of two copies of the Johnson graph $J(2k-1,k)$ with eigenvalues $\lambda_j=(k-j)(k-j-1)-j$ and multiplicities
$m_i=\binom{2k-1}{j}-\binom{2k-1}{j-1}$, for $j=0,1,\ldots, k-1$. Thus, from $\lambda_{j}=\theta_{j}^2-k$, we get
$$
\theta_{j}=\pm\sqrt{\lambda_j+k}=\pm\sqrt{(k-j)(k-j-1)-j+k}=\pm (k-j), \quad j=0,1,\ldots,k-1.
$$
with multiplicities given in \eqref{eigenvals+mult}.
\end{proof}

For instance, for the graph $G=F_3(S_5)$ in Figure \ref{fig:F3(S5)}, we have that $G_2$ consists of two copies of the Johnson graph $J(5,3)$ with diameter $D=k-1=2$. Then, the spectrum of $G$ is
$$
\spec F_3(S_5)=\{\pm 3^1, \pm 2^4, \pm 1^5\}.
$$

\begin{figure}[t]
    \begin{center}
  \includegraphics[width=9cm]{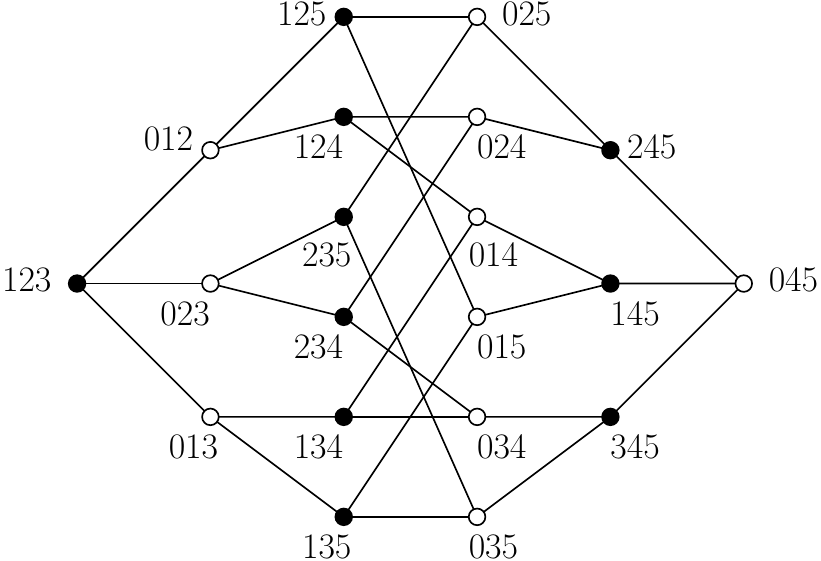}
  \end{center}
  \caption{The 3-token graph of $S_5$ ($S_5$ with central vertex 0).}
  \label{fig:F3(S5)}
  \end{figure}

\section{The supertoken graphs $F_k^s(G)$}
\label{sec:F_k^s(G)}

In this case, the $k$ tokens are indistinguishable, and the maximum number of tokens per vertex is $s$, with $1<s< k$.
The order and size of $F_k^s(G)$ are given in the following result (see Song, Dalf\'o, Fiol, Mora, and Zhang \cite{sdfmz25}).

\begin{proposition}[\cite{sdfmz25}]
Let $G=(V,E)$ be a graph on $n$ vertices. The order and size of the supertoken graph $F_k^s(G)$ are, respectively,
\begin{equation*}\label{nomial}
|V(F_k^s(G))| = \sum_{i=0}^{\lfloor k/(s+1)\rfloor} (-1)^i \binom{n}{i} \binom{n+k-1-i(s+1)}{n-1},
\end{equation*}
and 
\begin{equation*}\label{eq:edge-fks}
|E(F_k^s(G))| = |E| \left(\sum_{i=1}^s i f(n-2,k-i,s) + \sum_{j=1}^{s-1} (s-j)f(n-2,k-s-j,s) \right).
\end{equation*}
\end{proposition}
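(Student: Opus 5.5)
The order formula is a count of bounded compositions. Each vertex of $F_k^s(G)$ is a multiset of vertices of $G$ in which every multiplicity is at most $s$. Equivalently, it is an integer solution of $x_1+\cdots+x_n=k$ with $0\le x_v\le s$.

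I will count these solutions by inclusion--exclusion on the set of coordinates that violate the bound.
\begin{itemize}
\item For a fixed set $I$ of $i$ coordinates, substitute $x_v=y_v+(s+1)$ for each $v\in I$, with $y_v\ge 0$. The number of nonnegative solutions with $x_v\ge s+1$ for all $v\in I$ is then $\binom{n+k-1-i(s+1)}{n-1}$, by stars and bars.
\item This count vanishes once $i(s+1)>k$.
\item Summing $(-1)^i\binom{n}{i}$ times this count over $i=0,\dots,\lfloor k/(s+1)\rfloor$ gives the stated order.
\end{itemize}
Along the way I will write $f(n,k,s)$ for this number of bounded compositions of $k$ into $n$ parts. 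I expect the size formula refers to exactly this $f$.

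For the size, I will count edges according to the edge $uv\in E$ along which the token moves. An edge of $F_k^s(G)$ along $uv$ joins two configurations that agree off $\{u,v\}$. On $\{u,v\}$, one configuration has $(a,b)$ tokens and the other has $(a-1,b+1)$. I will identify each such edge with the configuration $(a,b)$ in which $u$ holds $a\ge 1$ tokens and $b+1\le s$.

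So the number of edges along $uv$ is the number of triples consisting of $a$, $b$, and a bounded configuration of $k-a-b$ tokens on the other $n-2$ vertices, subject to $1\le a\le s$ and $0\le b\le s-1$. This number is
$$\sum_{a=1}^{s}\sum_{b=0}^{s-1} f(n-2,k-a-b,s),$$
and it is the same for every edge of $G$. Hence the total size is $|E|$ times this double sum.

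It remains to regroup the double sum by the value $t=a+b$. This is the step where the bookkeeping must match the stated expression exactly.
\begin{itemize}
\item For $1\le t\le s$, the number of admissible pairs $(a,b)$ is $t$, since $a$ ranges over $1,\dots,t$. This gives the first sum, with $i=t$.
\item For $t=s+j$ with $1\le j\le s-1$, the number of pairs is $s-j$, since $a$ ranges over $j+1,\dots,s$. This gives the second sum, $\sum_{j=1}^{s-1}(s-j)f(n-2,k-s-j,s)$.
\end{itemize}
I will also adopt the convention $f(\cdot,m,s)=0$ for $m<0$. With this convention the sums may run over their full ranges even when $k$ is small.

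The only delicate point is checking that no edge is counted twice, and I will settle it by orienting each edge. Fix an orientation $u\to v$ for each edge of $G$, and record every edge of $F_k^s(G)$ along $uv$ by its endpoint in which $u$ holds one more token. Each unordered adjacency then corresponds to exactly one such pair $(a,b)$. A token moving from $v$ to $u$ is just the reverse reading of the same edge, so it produces nothing new.
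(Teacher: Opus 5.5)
Your proof is correct. The order follows from inclusion--exclusion over the vertices carrying more than $s$ tokens. For the size, your per-edge count $\sum_{a=1}^{s}\sum_{b=0}^{s-1} f(n-2,k-a-b,s)$, regrouped by $t=a+b$, reproduces the stated formula exactly: for each $t$, the admissible splits $(a,t-a)$ of the tokens on $\{u,v\}$ form a path with $\min\{t,2s-t\}$ edges. This uses your reading of $f(n,k,s)$ as the number of distributions of $k$ indistinguishable tokens on $n$ vertices with at most $s$ per vertex, which is the right one; the paper never defines $f$.

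The paper gives no proof of this proposition (it is quoted from earlier work), so there is nothing to compare against, and your argument is the natural direct derivation whose structure the formula reflects. As a check, for $G=S_n$, $k=3$, $s=2$ it gives $n\left[\binom{n}{2}+2n-1\right]$, which matches the edge count the paper states for $F_3^2(S_n)$.
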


\subsection{The supertoken graphs $F_3^2(G)$ of a star}

Regarding the star graph, some simple properties of $F_3^2(S_n)$ are the following, where $S_n$ stands for the star with $n(\ge 2$) leaves (so, with order $n+1$).
\begin{itemize}
\item[$\circ$]
$F_3^2(S_n)$ has $N=\binom{n+1}{3}+n^2+n$ vertices. 

\item[$\circ$]
$F_3^2(S_n)$ has $n\left[\binom{n}{2}+2n-1\right]$ edges.

\item[$\circ$]
$F_3^2(S_n)$ is a bipartite graph.

\end{itemize}
\begin{proposition}
\label{propo:F_3^2(S_n)}
    The vertex set of $F_3^2(S_n)$ admits a regular partition, which gives some of its eigenvalues, namely, $0$,$\pm\sqrt{5n-4}$, and $\pm\sqrt{n-2}$.
\end{proposition}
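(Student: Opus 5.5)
The plan is to exhibit an explicit regular (equitable) partition of $V(F_3^2(S_n))$ by the "shape" of the token distribution, compute its $5\times 5$ quotient matrix, and read off the quotient eigenvalues. These are eigenvalues of $F_3^2(S_n)$, since eigenvalues of the quotient of a regular partition are eigenvalues of the graph.

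Label the center of $S_n$ by $0$ and the leaves by $1,\ldots,n$. A vertex is a multiset of size $3$ on $\{0,\ldots,n\}$ with each multiplicity at most $2$. I would use the five classes
$$
A=\{0,0,i\},\quad B=\{0,i,j\},\quad C=\{0,i,i\},\quad D=\{i,j,l\},\quad E=\{i,i,j\},
$$
with $i,j,l$ distinct leaves. Their sizes are $n$, $\binom n2$, $n$, $\binom n3$ and $n(n-1)$. These sum to $\binom{n+1}{3}+n^2+n$, which confirms the partition is exhaustive.

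The key step is to count, for a vertex in each class, its neighbours in every class. Recall that a move sends one token along a center–leaf edge, subject to the cap of $2$ tokens per vertex.
\begin{itemize}
\item From $A$: moving a center token to leaf $j\neq i$ gives $B$ ($n-1$ ways), and moving it to leaf $i$ gives $C$ ($1$ way). Moving the leaf token to the center is forbidden, since the center would hold $3$ tokens.
\item From $B$: moving the center token to a new leaf gives $D$ ($n-2$ ways), and moving it to $i$ or $j$ gives $E$ ($2$ ways). Moving either leaf token to the center gives $A$ ($2$ ways).
\item From $C$: moving the center token to a leaf $l\neq i$ gives $E$ ($n-1$ ways), and returning a leaf token gives $A$ ($1$ way).
\item From $D$: any of the $3$ leaf tokens can move to the center, giving $B$.
\item From $E=\{i,i,j\}$: moving an $i$-token gives $B$, and moving the $j$-token gives $C$.
\end{itemize}
Since these counts do not depend on the chosen representative, the partition is regular. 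Ordering the classes as $A,B,C,D,E$, the quotient matrix is
$$
\Q=\left(\begin{array}{ccccc} 0&n-1&1&0&0\\ 2&0&0&n-2&2\\ 1&0&0&0&n-1\\ 0&3&0&0&0\\ 0&1&1&0&0\end{array}\right).
$$

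To get the spectrum of $\Q$, I would exploit bipartiteness: the classes split as $\{A,D,E\}$ and $\{B,C\}$. Writing $\Q$ in block form with off-diagonal blocks $X$ ($3\times 2$) and $Y$ ($2\times 3$), the nonzero eigenvalues of $\Q$ are $\pm\sqrt{\mu}$, where $\mu$ runs over the eigenvalues of $YX$. A direct computation gives
$$
YX=\left(\begin{array}{cc}5n-6&4\\ 2n-2&n\end{array}\right).
$$
This matrix has trace $6n-6$ and determinant $5n^2-14n+8=(5n-4)(n-2)$, hence eigenvalues $5n-4$ and $n-2$. Finally, since $3-2=1$, the matrix $\Q$ has the extra eigenvalue $0$. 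So $\spec\Q=\{0,\pm\sqrt{5n-4},\pm\sqrt{n-2}\}$, which are the claimed eigenvalues of $F_3^2(S_n)$.

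The only real obstacle is the neighbour count. One must respect the cap of $2$ tokens per vertex, for instance that $A$ has no neighbour obtained by moving a token into the center and $C$ has no move onto leaf $i$, and must handle the $2$ versus $n-2$ split from $B$ correctly. Once the table is right, the eigenvalue computation is routine.
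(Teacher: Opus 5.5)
Your proof is correct and follows the paper's argument. Your classes $A,B,C,D,E$ are the paper's $V_3,V_2,V_4,V_1,V_5$, your quotient matrix is the paper's $Q_\pi$ up to reordering, and your bipartition $\{A,D,E\}\cup\{B,C\}$ is the paper's pair of stable sets; your reduction to the $2\times 2$ matrix $YX$, which gives characteristic polynomial $\lambda(\lambda^2-(5n-4))(\lambda^2-(n-2))$, just makes explicit the eigenvalue computation that the paper states without detail.
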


\begin{proof}
    Let $V(S_n)=\{0,1,\ldots,n\}$ with $0$ as the central vertex of $S_n$. Denote by $(u_1,u_2,u_3)$ the vertex of $F_3^2(S_n)$, where $u_1,u_2,u_3\in V(S_n)$ and at most two of them are equal. Let $\pi=(V_1,V_2,\ldots,V_5)$ be a partition of $V(F_3^2(S_n))$, where
\begin{itemize}
\item $V_1=\{(u_1,u_2,u_3):u_1,u_2,u_3\ne 0 ~\text{and $u_1,u_2,u_3$ are distinct}\}$,\quad $|V_1|=\binom{n}{3}$.
\item
$V_2=\{(0,u_2,u_3):u_2,u_3\ne 0 ~\text{and $u_2\neq u_3$}\}$,\quad $|V_2|=\binom{n}{2}$.\item 
$V_3=\{(0,0,u_3):u_3\in[n]\}$,\quad $|V_3|=n$.
\item
$V_4=\{(0,u_2,u_3):u_2=u_3\in[n]\}$,\quad $|V_3|=n$.
\item   
$V_5=\{(u_1,u_2,u_3):u_1{,u_2}\in[n],u_2=u_3\ne u_1\}$,\quad $|V_5|=n(n-1)$.
    \end{itemize}
   
    The quotient graph of $F_3^2(S_n)$ with respect to $\pi$ is shown in Figure \ref{fig:F^2_3quotient}. The quotient matrix, with rows ans columns indexed by $V_1,V_3,V_5,V_2,V_4$, is 
    \begin{equation*}
    \hskip-.75cm 
    \Q_\pi = \left(
  \begin{array}{ccccccc}
    0 & 0 & 0 & 3 & 0 \\
    0 & 0 & 0 & n-1 & 1 \\
    0 & 0 & 0 & 1 & 1 \\
    n-2   & 2   & 2 & 0 & 0 \\
    0   & 1   & n-1 & 0 & 0  
  \end{array}
  \right),
    \end{equation*} 
showing that the stable sets are $V_1\cup V_3\cup V_5$ and $V_2\cup V_4$. The spectrum of this matrix is   
$$
\spec \Q_\pi=\{
0,\pm\sqrt{5n-4},\pm\sqrt{n-2}\}\subset\spec F_3^2(S_n).
$$    
  This completes the proof.
\end{proof}

\begin{figure}   
    \begin{center}
    \begin{tikzpicture}[scale=1.1,auto,swap]
    \draw[] (1,1) circle (0.4);
    \node () at (1,0.6) [label=above:{{\small $V_1$}}]{};
    \draw[] (3.5,1) circle (0.4);
    \node () at (3.5,0.6) [label=above:{{\small $V_2$}}]{};
    \draw[] (6,1) circle (0.4);
    \node () at (6,0.6) [label=above:{{\small $V_3$}}]{};
    \draw[] (8.5,1) circle (0.4);
    \node () at (8.5,0.6) [label=above:{{\small $V_4$}}]{};
    \draw[] (11,1) circle (0.4);
    \node () at (11,0.6) [label=above:{{\small $V_5$}}]{};
    \draw (1.4,1) edge [line width=0.6pt] (3.1,1);
    \draw (3.9,1) edge [line width=0.6pt] (5.6,1);
    \draw (6.4,1) edge [line width=0.6pt] (8.1,1);
    \draw (8.9,1) edge [line width=0.6pt] (10.6,1);
    \draw (3.5,1.4) edge [line width=0.6pt,bend left] (11,1.4);
    \node () at (1.5,1.5) [label=below:{\small $3$}]{};
    \node () at (2.7,1.5) [label=below:{\small $n-2$}]{};
    \node () at (4,1.5) [label=below:{\small $2$}]{};
    \node () at (5.2,1.5) [label=below:{\small $n-1$}]{};
    \node () at (6.5,1.5) [label=below:{\small $1$}]{};
    \node () at (8,1.5) [label=below:{\small $1$}]{};
    \node () at (9.3,1.5) [label=below:{\small $n-1$}]{};
    \node () at (10.5,1.5) [label=below:{\small $1$}]{};
    \node () at (3.5,1.4) [label=above:{\small $2$}]{};
    \node () at (11,1.4) [label=above:{\small $1$}]{};
    \end{tikzpicture}
    \caption{The quotient graph of $F_3^2(S_n)$ with respect to the regular partition     $\pi$. 
    }
     \label{fig:F^2_3quotient}
    \end{center}
\end{figure}

\section{The $k$-supertoken graphs $F_k^k(G)$}
\label{sec:F_k^k(G)}

Let $G$ be a graph of order $n$ and size $m$. 
In this case, the $k$ tokens are indistinguishable, and the maximum number of tokens per vertex is $k$.
The supertoken graph $F_k^k(G)$ has order $|V(F_k^k(G))|={|CR^n_{k}|}=\binom{n+k-1}{k}$
and size $|E(F_k^k(G))|=m{\binom{n+k-2}{k-1}}$ (see \cite{sdfmz25}).

This kind of supertoken graph was introduced by Hammack and Smith~\cite{HaSm17}, who named them \emph{reduced power graphs} and denoted them as $G^{(k)}(\cong F_k^k(G))$. They studied some of their structural properties
and provided a construction of minimum cycle bases of $G^{(k)}$.
The 5-supertoken $F_5^5(C_3)$ of the triangle $C_3$ with vertices $a,b,c$ is shown in Figure \ref{fig:F33(C3)}. It has 21 vertices, 45 edges, and the spectrum
$$
\left\{2+\sqrt{8},(1+\sqrt{6})^2,2,\sqrt{3}^2,(-1+\sqrt{2})^2,0^2,2-\sqrt{8},-1^2,(1-\sqrt{6})^2,-\sqrt{3}^2,-2^2,(-1-\sqrt{2})^2\right\}.
  $$
  \begin{figure}[t]
    \begin{center}
  \includegraphics[width=7cm]{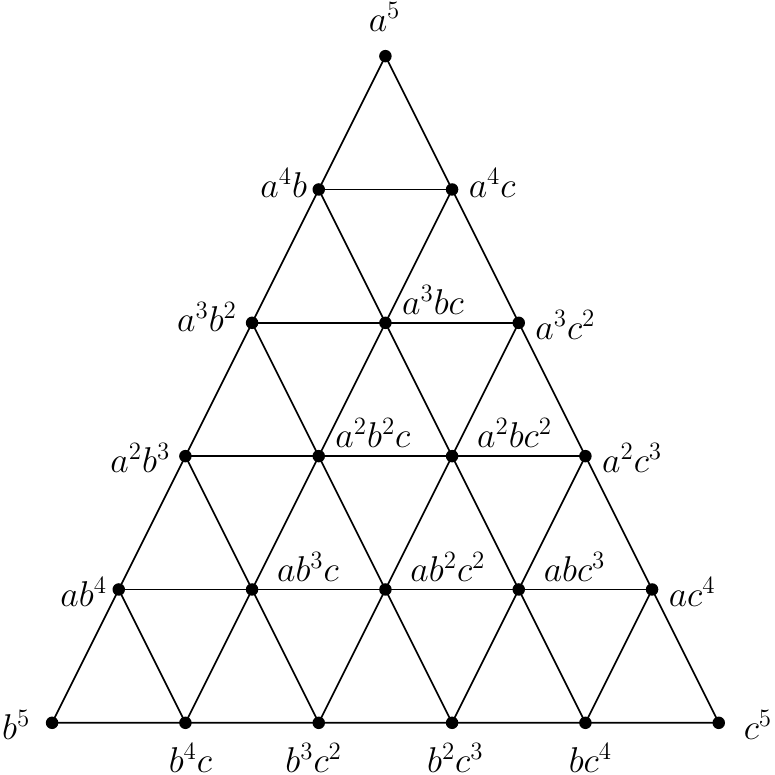}
  \end{center}
  \caption{The 5-supertoken graph $F_5^5$ of the triangle $C_3$ with vertices $a,b,c$.}
  \label{fig:F33(C3)}
  \end{figure}

If $G=K_n$,
then the vertices of the supertoken graph $F_k^k(K_n)$ represent the different possible states of a multiprocessor with $n$ memory modules and $k$ (indistinguishable) processors.

\subsection{The $k$-supertoken graphs $F_2^2(G)$ of a star}

Now, we deal with some properties of the graph $F_2^2(S_n)$, where $S_n$ stands for the star with $n$ leaves (so, with order $n+1$).

Some simple properties of this graph are the following:
\begin{itemize}
\item[$\circ$]
$F_2^2(S_n)$ has $N={\binom{n+2}{2}=}\binom{n}{2}+2n+1$ vertices.

\item[$\circ$]
$F_2^2(S_n)$ has $n^2+n$ edges.

\item[$\circ$]
$F_2^2(S_n)$ is a bipartite graph, and $F_2^1(S_n)\subset F_2^2(S_n)$.

\item[$\circ$]
$F_2^2(S_n)$ has diameter 4.
\end{itemize}

 \begin{figure}[h!]
    \begin{center}
  \includegraphics[width=6cm]{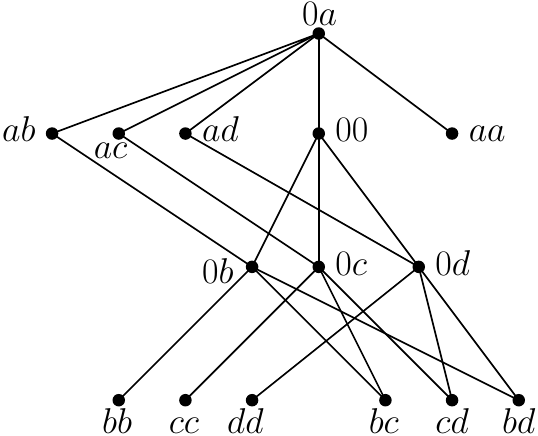}
  \end{center}
  \caption{The supertoken graph $F_2^2(S_4)$.}
  \label{fig:F_2^2(S_4)}
  \end{figure}

\begin{figure}[h!]
    \begin{center}
  \includegraphics[width=14cm]{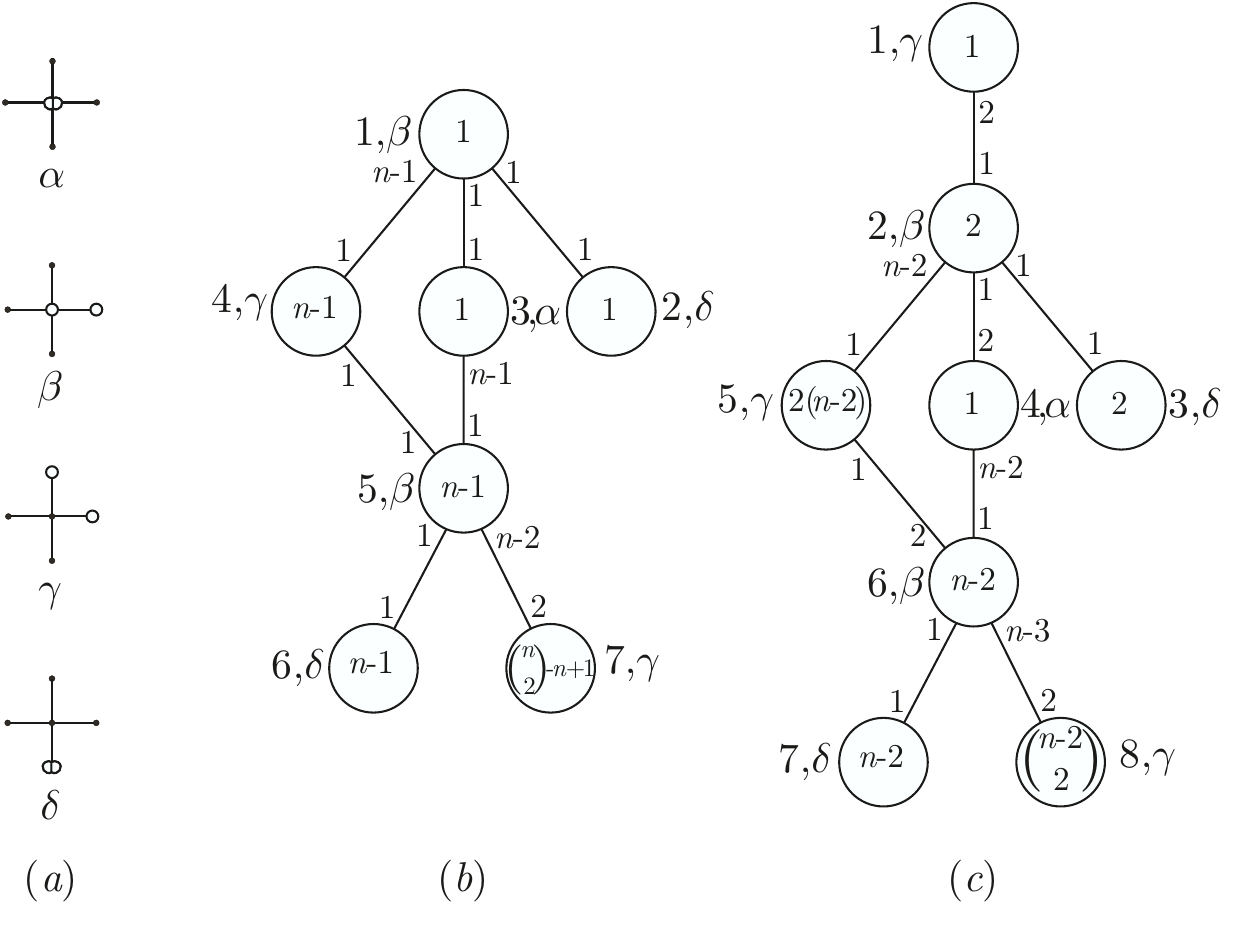}
  \end{center}
  \caption{$(a)$ The different types of vertices in $G=F_2^2(S_n)$. $(b)$ The regular partitions of $G=F_2^2(S_n)$ when $G$ is hung from a vertex $\beta$. $(c)$ The regular partition $G=F_2^2(S_n)$ when $G$ is hung from a vertex $\gamma$. Every vertex has two labels outside itself; the first corresponds to the enumeration of the vertices, and the second indicates the type of vertex ($\alpha$, $\beta$, $\gamma$, or $\delta$). 
  }
  \label{fig:quotients4}
  \end{figure}
  
We recall a very well-known result (see, for example, Godsil \cite{Go93}, or Dalf\'{o} and Fiol~\cite{df17}) since it is used in the proof of Proposition~\ref{prop:sp-F_2^2(S_n)}.
\begin{lemma}[\cite{Go93,df17}]
\label{lemma:CrisMA}
Let $G$ be a (di)graph with adjacency matrix $\A$. Let $\pi=(V_1,\ldots,V_m)$ be a regular partition of $G$, with quotient matrix $\Q$. Then, the number of $k$-walks from each vertex $u\in V_i$ to all vertices of $V_j$ is the
$ij$-entry of $\Q^k$.
\end{lemma}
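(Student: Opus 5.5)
This is the standard walk-counting property of a regular (equitable) partition, so the plan is an induction on $k$ using the defining property of $\pi$. Write $\Q=(q_{ij})$. Regularity of $\pi$ means that every vertex $u\in V_i$ has exactly $q_{ij}$ out-neighbours in $V_j$; in the digraph case these are arcs $u\to v$, and in the graph case they are neighbours. In matrix form, let $\mathbf{S}$ be the $N\times m$ characteristic matrix of $\pi$, where $N$ is the number of vertices; its $j$-th column is the indicator vector of $V_j$. Regularity is then exactly the identity $\A\mathbf{S}=\mathbf{S}\Q$. For $u\in V_i$, the number of $k$-walks from $u$ to $V_j$ is $\sum_{v\in V_j}(\A^k)_{uv}=(\A^k\mathbf{S})_{uj}$. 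The statement therefore reduces to showing that $(\A^k\mathbf{S})_{uj}=(\Q^k)_{ij}$ whenever $u\in V_i$, which is the same as the identity $\A^k\mathbf{S}=\mathbf{S}\Q^k$.

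I would prove $\A^k\mathbf{S}=\mathbf{S}\Q^k$ by induction on $k$.
\begin{itemize}
\item The case $k=0$ is trivial, since $\Q^0=\I$ and a $0$-walk from $u$ reaches only $u$ itself, which lies in $V_i$.
\item The case $k=1$ is the definition of regularity.
\item For the inductive step, $\A^{k+1}\mathbf{S}=\A(\A^k\mathbf{S})=\A\mathbf{S}\Q^k=\mathbf{S}\Q\Q^k=\mathbf{S}\Q^{k+1}$.
\end{itemize}

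The inductive step can also be read combinatorially. A $(k+1)$-walk from $u\in V_i$ to $V_j$ splits according to the class $V_l$ containing its first step. There are $q_{il}$ choices for that first vertex $w\in V_l$. By induction, from each such $w$ there are $(\Q^k)_{lj}$ walks of length $k$ into $V_j$, and this count is independent of which $w$ was chosen. Summing over $l$ gives $\sum_l q_{il}(\Q^k)_{lj}=(\Q^{k+1})_{ij}$.

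There is no real obstacle here. The only point needing care is the observation that the count depends on the class of the starting vertex and not on the vertex itself; this is precisely what regularity guarantees at each step, and it is what lets the induction close. In the digraph case one must also use out-neighbours consistently, so that $q_{ij}$ counts arcs leaving $u$.
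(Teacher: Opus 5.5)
Your proof is correct. Iterating the identity $\A\mathbf{S}=\mathbf{S}\Q$ to $\A^k\mathbf{S}=\mathbf{S}\Q^k$ and reading it off at a row $u\in V_i$ is the standard argument; the paper gives no proof of its own and simply cites Godsil and Dalf\'o--Fiol, whose argument yours matches.
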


\begin{proposition}
\label{prop:sp-F_2^2(S_n)}
The adjacency spectrum of $F_2^2(S_n)$ is
$$
\spec F_2^2(S_n)=\left\{\sqrt{3n-1}^{\,1},\sqrt{n-1}^{\,n-1}, 
0^{\,{\binom{n}{2}+1}},-\sqrt{n-1}^{\,n-1},-\sqrt{3n-1}^{\,1}\right\}.
$$
Moreover, the vertex set of $F_2^2(S_n)$ admits a regular partition.
\end{proposition}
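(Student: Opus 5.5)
Our plan is to describe $F_2^2(S_n)$ explicitly and compute its characteristic polynomial through $\A^2$. Let the center be $0$ and the leaves $1,\ldots,n$. There are four types of vertices:
\begin{itemize}
\item $\alpha=\{0,0\}$, one vertex;
\item $\beta_i=\{0,i\}$, $n$ vertices;
\item $\gamma_i=\{i,i\}$, $n$ vertices;
\item $\delta_{ij}=\{i,j\}$ with $i<j$, $\binom n2$ vertices.
\end{itemize}
Moving one token along an edge of the star gives the adjacencies. The vertex $\alpha$ is adjacent to every $\beta_i$. Each $\beta_i$ is adjacent to $\alpha$, to $\gamma_i$ and to the $n-1$ vertices $\delta_{ij}$. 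Each $\gamma_i$ is adjacent only to $\beta_i$. Each $\delta_{ij}$ is adjacent to $\beta_i$ and $\beta_j$. This gives $n+n+n(n-1)=n^2+n$ edges.

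The graph is bipartite with parts $B=\{\beta_i\}$, of size $n$, and $W=\{\alpha\}\cup\{\gamma_i\}\cup\{\delta_{ij}\}$, of size $\binom n2+n+1$. Let $N$ be the $n\times|W|$ biadjacency matrix. The nonzero eigenvalues are $\pm\sqrt{\mu}$, where $\mu$ runs over the eigenvalues of $NN^\top$. Zero has multiplicity at least $|W|-n=\binom n2+1$, with equality exactly when $NN^\top$ is nonsingular.

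Next we compute $NN^\top$. Its diagonal entry at $\beta_i$ is the degree $1+1+(n-1)=n+1$. For $i\ne j$, the common neighbours of $\beta_i$ and $\beta_j$ in $W$ are $\alpha$ and $\delta_{ij}$, so the off-diagonal entries are $2$. Hence
$$NN^\top=(n-1)\I+2J.$$
Its eigenvalues are $(n-1)+2n=3n-1$ with multiplicity $1$, and $n-1$ with multiplicity $n-1$. Both are positive, so $NN^\top$ is nonsingular. This yields exactly the claimed spectrum: $\pm\sqrt{3n-1}$ simple, $\pm\sqrt{n-1}$ with multiplicity $n-1$, and $0$ with multiplicity $\binom n2+1$. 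As a check, the multiplicities sum to $2+2(n-1)+\binom n2+1=\binom{n+2}{2}$.

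For the regular partition, we take the partition into the four types $\alpha,\beta,\gamma,\delta$. Counting neighbours between classes gives the quotient matrix, with rows and columns ordered $\alpha,\beta,\gamma,\delta$:
$$\Q=\left(\begin{array}{cccc}0&n&0&0\\ 1&0&1&n-1\\ 0&1&0&0\\ 0&2&0&0\end{array}\right).$$
Its characteristic polynomial is $z^4-(3n-1)z^2$, whose roots $\pm\sqrt{3n-1}$ and $0$ are consistent with the spectrum. The finer partitions in Figure~\ref{fig:quotients4}, obtained by hanging the graph from a vertex $\beta$ or $\gamma$, can be checked the same way and recover $\pm\sqrt{n-1}$; Lemma~\ref{lemma:CrisMA} may be cited to justify that walk counts match.

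The only delicate step is the common-neighbour count in $NN^\top$. It must include $\alpha$ as a common neighbour of all pairs $\beta_i,\beta_j$, and it must correctly exclude the $\gamma$ vertices, each of which has only one $\beta$-neighbour.
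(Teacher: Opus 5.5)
Your proof is correct, and it reaches the result by a more direct route than the paper.

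The paper's route is as follows. It hangs $F_2^2(S_n)$ from a vertex of each of two types, obtaining regular partitions with $7\times 7$ and $8\times 8$ quotient matrices $\Q_\beta,\Q_\gamma$, and reads the distinct eigenvalues $\pm\sqrt{3n-1},\pm\sqrt{n-1},0$ off their spectra. It then uses Lemma~\ref{lemma:CrisMA} to argue that all closed-walk counts, and hence $\tr\A^k$, are governed by these eigenvalues. Finally it determines the multiplicities from $\tr\A^0=N$, $\tr\A^2=2M$, the symmetry of a bipartite spectrum, and the simplicity of the Perron eigenvalue.

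You instead note that one colour class, $\{\beta_i\}$, has only $n$ vertices and Gram matrix $NN^\top=(n-1)\I+2J$. The identity $\det(z\I-\A)=z^{|W|-n}\det(z^2\I-NN^\top)$ then gives every eigenvalue together with its multiplicity in one step.

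Your argument is shorter and fully self-contained. It needs neither Perron--Frobenius nor the trace system. It also sidesteps a point the paper treats only implicitly: why no eigenvalue of $\A$ can lie outside $\spec\Q_\beta\cup\spec\Q_\gamma$. In return, the paper's approach is a generic template, reused for $F_{2\times1}^1(S_n)$, that does not depend on finding a Gram matrix of the form $a\I+bJ$.

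Your four-class partition by vertex type is a valid regular partition. Your $\Q$, with characteristic polynomial $z^4-(3n-1)z^2$, is correct. On its own it only exhibits $\pm\sqrt{3n-1}$ and $0$, but that is all you need it for.

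One cosmetic point: your labels $\gamma$ and $\delta$ are swapped relative to the paper's (your $\gamma_i=\{i,i\}$ is the paper's $\delta$). So your aside about hanging from ``a vertex $\gamma$'' in Figure~\ref{fig:quotients4} mixes the two conventions. Since that aside plays no role in your argument, you can simply drop it.
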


\begin{proof}
We have four kinds of vertices in $F_2^2(S_n)$: $n_{\alpha}=1$ vertex $\alpha$, corresponding to having both tokens in the center of the star; $n_{\beta}=n$ vertices $\beta$ with a token in the center and the other in a leave; $n_{\gamma}=\binom{n}{2}$ vertices $\gamma$ with the two tokens in different leaves; and $n_{\delta}=n$ vertices $\delta$ with both tokens in the same leave. See Figure~\ref{fig:quotients4}$(a)$, where the different classes of vertices are illustrated for the case $n=4$.
To obtain regular partitions, we `hung' $F_2^2(S_n)$ first from a vertex $\beta$ (see Figure \ref{fig:F_2^2(S_4)} for the case of $F_2^2(S_4)$), and then from a vertex $\gamma$ (see Figures~\ref{fig:quotients4}$(b)$ and \ref{fig:quotients4}$(c)$). We do not need to hung $F_2^2(S_n)$ from a vertex $\alpha$ or $\delta$, because, in  
Figure~\ref{fig:quotients4}$(b)$, there is a class with only one vertex $\alpha$ and another class with only one vertex $\delta$, whereas in Figure~\ref{fig:quotients4}$(c)$ there is a class with only one vertex $\alpha$.
We write the quotient matrices $\Q_{\beta}$ and $\Q_{\gamma}$ of these two regular partitions, and we obtain their eigenvalues. Both matrices are indexed by the classes $1,2,\ldots, 7$ and  $1,2,\ldots, 8$, respectively. They and their spectra are the following:

\begin{eqnarray*}
  && \hskip-.75cm \Q_{\beta} = 
  \left(
  \begin{array}{ccccccc}
     0 & 1 & 1 & n-1 & 0   & 0 & 0   \\
     1 & 0   & 0 & 0 & 0   & 0 & 0   \\
     1 & 0   & 0 & 0 & n-1 & 0 & 0   \\
     1 & 0   & 0 & 0 & 1   & 0 & 0   \\
     0 & 0   & 1 & 1 & 0   & 1 & n-2 \\
     0 & 0   & 0 & 0 & 1   & 0 & 0   \\
     0 & 0   & 0 & 0 & 2   & 0 & 0
   \end{array}
  \right),
  \quad \spec \Q_{\beta}=\left\{\pm\sqrt{3n-1}^1, \pm\sqrt{n-1}^1, 0^3\right\},\\
  && \hskip-.75cm \Q_{\gamma} = \left(
  \begin{array}{cccccccc}
    0 & 2 & 0 & 0 & 0   & 0   & 0 & 0   \\
    1 & 0 & 1 & 1 & n-2 & 0   & 0 & 0   \\
    0 & 1 & 0 & 0 & 0   & 0   & 0 & 0   \\
    0 & 2 & 0 & 0 & 0   & n-2 & 0 & 0   \\
    0 & 1 & 0 & 0 & 0   & 1   & 0 & 0   \\
    0 & 0 & 0 & 1 & 2   & 0   & 1 & n-3 \\
    0 & 0 & 0 & 0 & 0   & 1   & 0 & 0   \\
    0 & 0 & 0 & 0 & 0   & 2   & 0 & 0
  \end{array}
  \right), \quad \spec \Q_{\gamma}=\left\{\pm\sqrt{3n-1}^1, \pm\sqrt{n-1}^1, 0^4\right\}.
\end{eqnarray*}
The corresponding diagonal element of the $k$-th power of a quotient matrix gives the number of closed walks of length $k=0,1,2,\ldots$ in $F_2^2(S_n)$ rooted at this vertex (see Lemma~\ref{lemma:CrisMA}). More precisely, for every $k=0,1,2,\ldots$, $(\Q_{\beta}^k)_{11}$ gives  the number $n_{\beta}^{(k)}$ of closed walks of length $k$ rooted at a vertex $\beta$,    $(\Q_{\beta}^k)_{33}$ gives that
$n_{\alpha}^{(k)}$ rooted at a vertex $\alpha$, $(\Q_{\gamma}^k)_{11}$ gives that $n_{\gamma}^{(k)}$ of a vertex $\gamma$, and finally $(\Q_{\gamma}^k)_{33}$ gives that 
$n_{\delta}^{(k)}$ of a vertex $\delta$. 
Note that, for each graph $F_2^2(S_n)$, all the above numbers only depend on the eigenvalues in $\spec \Q_{\beta}\cup \spec \Q_{\gamma}$.
Moreover, as we know how many vertices are of each type, namely, $n_{\alpha},n_{\beta},n_{\gamma}$, and  $n_{\delta}$, we can compute the total number of closed $k$-walks, that is,
\begin{equation}
\label{traceA^k}
\textrm{tr} \A^{k}=n_{\alpha}^{(k)}+n\cdot n_{\beta}^{(k)}+{\binom{n}{2}}\cdot n_{\gamma}^{(k)}+n\cdot n_{\delta}^{(k)},\qquad k=0,1,2,\ldots
\end{equation}

Then, once we know all distinct eigenvalues,
that is, those obtained from $\Q_{\beta}$ or $\Q_{\gamma}$, we can obtain the multiplicities by using $\textrm{tr}\A^{k}=\sum_{i=0}^d m_{i} \lambda_i^{k}$ for small values of $k$, where 
$m_i$ is the multiplicity of $\lambda_i$.
For a graph $G$ with order $N$, size $M$, and adjacency matrix $\A$ with eigenvalues $\lambda_0$, $\lambda_1$, \ldots, $\lambda_d$, we have
\begin{equation}
\begin{split}
 k=0 &:\  \textrm{tr}\A^0=N=\sum_{i=0}^d m_{i},\\
 k=2 &:\  \textrm{tr}\A^2=2M=\sum_{i=0}^d m_{i} \lambda_i^2.
\end{split}
\label{eq:multipl}
\end{equation}
It is known that the multiplicity of the largest eigenvalue is $1$ if the graph is connected. Besides, since $F_2^2(S_n)$ is a bipartite graph, its spectrum is symmetric with respect to the eigenvalue $0$. So, the spectrum of $F_2^2(S_n)$ is
$$
\left\{\lambda_0^1=\sqrt{3n-1}^{\,1},\lambda_1^{\,m_1}=\sqrt{n-1}^{m_1}, \lambda_2^{m_2}=0^{\,m_2},-\lambda_1^{\,m_1}=-\sqrt{n-1}^{m_1},-\lambda_0^1=-\sqrt{3n-1}^{\,1}\right\}.
$$
Then, in our case, the system \eqref{eq:multipl} is
\begin{eqnarray*}
  && {\binom{n}{2}}+2n+1=2+2m_1+m_2,\\
  && 2(n^2+n)=2\sqrt{3n-1}^2+2m_1\sqrt{n-1}^2,
\end{eqnarray*}
from where we get $m_1=n-1$ and $m_2={\binom{n}{2}}+1$, as claimed.
\end{proof}

\section{The supertoken graphs $F^{1}_{k\times 1}(G)$}
\label{sec:k-colors-1tokenvertex}

Let $G$ be a graph of order $n$ and size $m$. In this case, the $k$ tokens are different, and the maximum number of tokens per vertex is one.
The supertoken graph $F^{1}_{k\times 1}(G)$ has order $\frac{n!}{(n-k)!}$ and size $mk\frac{(n-2)!}{(n-k-1)!}$.

\begin{proposition}\label{prop:regularF1_1111}
    Let $G$ be a graph with order $n$. 
    Then, the vertex set of $F^{1}_{k\times 1}(G)$ admits a regular partition with quotient graph $F_k^1(G)$.
\end{proposition}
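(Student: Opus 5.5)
The natural approach is to partition the vertices of $F^{1}_{k\times 1}(G)$ according to the underlying set of occupied vertices, forgetting the labels of the tokens. Think of a vertex of $F^{1}_{k\times 1}(G)$ as an injective map $f:[k]\to V(G)$, where $f(i)$ is the position of token $i$. For each $k$-subset $X\subseteq V(G)$, let $V_X$ be the set of injective maps with image $X$. The sets $V_X$ partition the vertex set, there are $\binom{n}{k}$ of them, and $|V_X|=k!$ for each $X$. The plan is to show that $\pi=(V_X)_X$ is a regular partition whose quotient matrix is exactly the adjacency matrix of $F_k(G)=F_k^1(G)$.

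First I record how adjacency works. Two vertices $f,g$ of $F^{1}_{k\times 1}(G)$ are adjacent precisely when they agree on all tokens but one, say token $i$, and $f(i)g(i)\in E(G)$ with $g(i)\notin f([k])$. In that case $f([k])\triangle g([k])=\{f(i),g(i)\}$ is an edge of $G$, so the images $X$ and $Y$ are adjacent in $F_k(G)$. Since an adjacent pair always has distinct images, there are no edges inside a class $V_X$, matching the zero diagonal of $\A(F_k(G))$. There are also no edges between $V_X$ and $V_Y$ when $X,Y$ are not adjacent in $F_k(G)$.

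The main step is to count neighbours across adjacent classes. Take $X,Y$ adjacent in $F_k(G)$, so $X\setminus Y=\{u\}$, $Y\setminus X=\{v\}$ and $uv\in E(G)$. Fix $f\in V_X$. Its neighbours in $V_Y$ must be obtained by moving one token onto $v$, and since the image must become $Y$, the token moved must be the unique token $i=f^{-1}(u)$. Hence $f$ has exactly one neighbour in $V_Y$, namely $f$ with $f(i)$ replaced by $v$. So the number of neighbours of any $f\in V_X$ in $V_Y$ is $1$ if $X\sim Y$ in $F_k(G)$ and $0$ otherwise, independently of the choice of $f$.

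This shows that $\pi$ is regular (indeed equitable) with quotient matrix $\Q=\A(F_k(G))$, so the quotient graph is $F_k(G)$. In fact, the between-class edges form perfect matchings, so $F^{1}_{k\times 1}(G)$ is a $k!$-fold cover of $F_k(G)$. There is no real obstacle. The only point needing care is uniqueness of the moved token, which follows from injectivity, that is, from the restriction of at most one token per vertex.
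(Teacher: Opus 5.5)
Your proof is correct, but it takes a different route from the paper. The paper argues by symmetry. If $B'$ is obtained from $B\in\AA_i$ by a permutation $\sigma$ of the token labels, then $B\sim C$ implies $\sigma(B)\sim\sigma(C)$, with $\sigma(C)$ in the same class as $C$. Since $\sigma$ is invertible, $B$ and $B'$ have equally many neighbors in every class. In other words, the classes are the orbits of the group of token relabelings acting by automorphisms, and an orbit partition is automatically equitable.

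That argument never computes the quotient entries. The paper checks only in the proof of the next proposition that a vertex has at most one neighbor in each class, so that the quotient matrix is the adjacency matrix of $F_k(G)$. On the other hand, the symmetry argument never uses $s=1$, so it shows at once that the analogous partition of $F^{s}_{k\times 1}(G)$ is regular for every $s$.

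Your direct count proves regularity and identifies the quotient in a single step: exactly one neighbor in $V_Y$ if $X\sim Y$ in $F_k(G)$, and none otherwise. It also reveals a stronger structure: the edges between adjacent classes form perfect matchings, so $F^{1}_{k\times 1}(G)$ is a $k!$-fold cover of $F_k(G)$. This is precisely the fact the paper needs, and proves separately, for the spectral inclusion in the next proposition.

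The price is that your count depends on injectivity, i.e., on $s=1$. For $s>1$ the number of neighbors of $f$ in $V_Y$ equals the number of tokens $f$ places on $u$. The quotient is then a weighted, generally non-symmetric matrix rather than the adjacency matrix of $F^{s}_{k}(G)$.
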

\begin{proof}
    Let $A_1,A_2,\ldots,A_p$ be the vertices of $F_k^1(G)$ with $p=\binom{n}{k}$, 
    and $\AA_i$ be the set of vertices in $F^{1}_{k\times 1}(G)$, where each vertex in $\AA_i$ corresponds to a distinct permutation of the tokens in the vertex $A_i$ for $i=1,\ldots,p$. 
    Note that $V(F^{1}_{k\times 1}(G))=\AA_1\cup\AA_2\cup\cdots\cup\AA_p$. It follows that $\pi=(\AA_1,\AA_2,\ldots,\AA_p)$ is a partition of $V(F^{1}_{k\times 1}(G))$.
    Let $B,B'$ be two vertices in $\AA_i$, where $i=1,\ldots,p$. 
    Assume that $B=(\alpha_1,\alpha_2,\ldots,\alpha_k)$.
    There exists a permutation $\sigma$ of $\{1,\ldots,k\}$
    such that $B'=(\alpha_{\sigma(1)},\alpha_{\sigma(2)},\ldots,\alpha_{\sigma(k)})$.
    It suffices to prove that $B$ and $B'$ have the same number of neighbors in $\AA_j$ for all $j$.
    Suppose first that $j=i$. 
    We find that $B$ and $B'$ have no neighbors in $\AA_i$. 
    Now assume that $j\ne i$.
    Let $C=(\beta_1,\beta_2,\ldots,\beta_k)$ be a neighbor of $B$ in $\AA_j$. Hence, $\alpha_s=\beta_s$ for some $s\in[k]$ and $\alpha_t=\beta_t$ for every $t\in [k]\setminus\{s\}$. Let $f$ be the one-to-one mapping from $\{\alpha_1,\alpha_2,\ldots,\alpha_k\}$ to $\{\beta_1,\beta_2,\ldots,\beta_k\}$ such that $\alpha_s\sim f(\alpha_s)=\beta_s$, and $f(\alpha_t)=\beta_t$ for every $t\in [k]\backslash\{s\}$.    
    Suppose that $\sigma(r)=s$. 
    Therefore, 
    \begin{eqnarray*}
B'=\left(\alpha_{\sigma(1)},\ldots,\alpha_{\sigma(r)},\ldots,\alpha_{\sigma(k)}\right)&\sim&\left(f(\alpha_{\sigma(1)}),\ldots,f(\alpha_{\sigma(r)}),\ldots,f(\alpha_{\sigma(k)})\right)\\
        &=&(\beta_{\sigma(1)},\ldots,\beta_s,\ldots,\beta_{\sigma(k)})\\
        &=&(\beta_{\sigma(1)},\ldots,\beta_{\sigma(r)},\ldots,\beta_{\sigma(k)}) \,=C'\in \AA_j.
    \end{eqnarray*}
   So, with self-explanatory notation, we have proved that
   $$
   B\sim C\ \implies \ B'=\sigma(B) \sim C'=\sigma (C),\quad C,C'\in \AA_j.
   $$
   Thus, $B$ and $B'$ have the same number of neighbors in $\AA_j$ for all $j$.
\end{proof}
\begin{proposition}
\label{propo:eigenvals F_{kx1}^1(G)}
    Let $\lambda_1,\ldots,\lambda_p$ be the adjacency eigenvalues of $F_k^1(G)$. Then $\lambda_i$ is an adjacency eigenvalue of $F^{1}_{k\times 1}(G)$ for $i=1,\ldots,p=\binom{n}{k}$.
\end{proposition}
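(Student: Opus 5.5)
The plan is to use the regular partition $\pi=(\AA_1,\ldots,\AA_p)$ from Proposition~\ref{prop:regularF1_1111}, whose quotient graph is $F_k^1(G)$, together with the standard fact that eigenvalues of the quotient matrix of a regular partition are eigenvalues of the whole graph. Let $\A$ be the adjacency matrix of $F^{1}_{k\times 1}(G)$. Let $\mbox{\boldmath $S$}$ be the $\frac{n!}{(n-k)!}\times p$ characteristic matrix of $\pi$, that is, its $(B,i)$-entry is $1$ if $B\in\AA_i$ and $0$ otherwise. Regularity of $\pi$ means exactly that $\A\mbox{\boldmath $S$}=\mbox{\boldmath $S$}\Q$, where $\Q$ is the quotient matrix. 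I will then lift eigenvectors: if $\Q\mbox{\boldmath $x$}=\lambda\mbox{\boldmath $x$}$ with $\mbox{\boldmath $x$}\neq\vec0$, then $\A(\mbox{\boldmath $S$}\mbox{\boldmath $x$})=\mbox{\boldmath $S$}\Q\mbox{\boldmath $x$}=\lambda\,\mbox{\boldmath $S$}\mbox{\boldmath $x$}$. Moreover $\mbox{\boldmath $S$}\mbox{\boldmath $x$}\neq\vec0$, because the columns of $\mbox{\boldmath $S$}$ have disjoint nonempty supports, so $\mbox{\boldmath $S$}$ has full column rank. Hence every eigenvalue of $\Q$ is an eigenvalue of $F^{1}_{k\times 1}(G)$. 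Since the $\mbox{\boldmath $S$}\mbox{\boldmath $x$}$ for linearly independent $\mbox{\boldmath $x$}$ stay independent, this even preserves multiplicities.

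The key step, and the only real point to check, is that $\Q$ equals the adjacency matrix of $F_k^1(G)$, not merely a weighted version of it. So I need to show that a vertex $B\in\AA_i$ has exactly one neighbor in $\AA_j$ when $A_i\sim A_j$ in $F_k(G)$, and none otherwise. Suppose $A_j=(A_i\setminus\{u\})\cup\{v\}$ with $uv\in E(G)$. Write $B=(\alpha_1,\ldots,\alpha_k)$ with $\alpha_s=u$. A neighbor of $B$ is obtained by moving a single token along an edge, and the only way to land in $\AA_j$ is to move the token at $u$ to $v$. This gives the unique neighbor $(\alpha_1,\ldots,v,\ldots,\alpha_k)$. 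Conversely, if $B$ has a neighbor in $\AA_j$, then the underlying sets $A_i$ and $A_j$ differ by one token moved along an edge, so $A_i\sim A_j$. There are also no neighbors inside $\AA_i$, as noted in the proof of Proposition~\ref{prop:regularF1_1111}. Therefore $\Q$ is the $0/1$ adjacency matrix of $F_k^1(G)$.

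Combining the two steps, the eigenvalues $\lambda_1,\ldots,\lambda_p$ of $\Q$ are the eigenvalues of $F_k^1(G)$, and each is an eigenvalue of $F^{1}_{k\times 1}(G)$. The main obstacle is the second step, namely confirming that the quotient entries are exactly $0$ or $1$; everything else is routine linear algebra.
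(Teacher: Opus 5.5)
Your proposal is correct and follows essentially the same route as the paper: take the regular partition $(\mathcal{A}_1,\ldots,\mathcal{A}_p)$ of Proposition~\ref{prop:regularF1_1111}, note that a vertex of $\mathcal{A}_i$ has only one neighbor in $\mathcal{A}_j$ when $A_i\sim A_j$, so the quotient matrix is the adjacency matrix of $F_k^1(G)$, and then use the standard fact that quotient eigenvalues of a regular partition are eigenvalues of the graph. Your version differs only in making explicit the eigenvector lifting $x\mapsto Sx$ and the existence of that neighbor (the paper argues only uniqueness), which the paper leaves implicit.
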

\begin{proof}
    Let $\A$ be the adjacent matrix of $F_k^1(G)$, and $\pi=(\AA_1,\AA_2,\ldots,\AA_p)$ be a regular partition of $F^{1}_{k\times 1}(G)$ shown in Proposition \ref{prop:regularF1_1111}.
    Consider $B_i\in \AA_i$.
    Note that each vertex in $\AA_i$ corresponds to a distinct permutation of the tokens in the vertex $B_i$ for $i=1,\ldots,p$.
    Thus, if $B_i$ is adjacent to some vertex $B_j\in\AA_j$, then $B_i$ cannot be adjacent to any other vertex in $\AA_j$ in addition to $B_j$.
    Hence, the quotient matrix of $F^{1}_{k\times 1}(G)$ with respect to $\pi$ should be $\A$. Since $\pi$ is a regular partition, the eigenvalues of $\A$ must be the eigenvalues of $F^{1}_{k\times 1}(G)$.
\end{proof}
\begin{example}
    Let $S_3$ be a star with center $0$ and leaves $1,2,3$. 
    Let $\AA_1=\{(0,1),(1,0)\}$, $\AA_2=\{(0,2),(2,0)\}$, $\AA_3=\{(0,3),(3,0)\}$, $\AA_4=\{(2,1),(1,2)\}$, $\AA_5=\{(3,2),(2,3)\}$, and $\AA_6=\{(1,3),(3,1)\}$. 
    It follows that $\pi=(\AA_1,\AA_2,\AA_3,\AA_4,\AA_5,\AA_6)$ is a regular partition of $V(F^{1}_{2\times 1}(S_3))$, and the quotient graph $F^{1}_{2\times 1}(S_3)/\pi$ is shown to the right of Figure \ref{fig:example-of-regular-partition}. 
    Note that the quotient matrix $\Q_{\pi}$ is 
      \begin{equation*}
    \hskip-.75cm 
    \Q_\pi = \left(
  \begin{array}{ccccccc}
    0 & 0 & 0 & 1 & 0  & 1 \\
    0 & 0 & 0 & 1 & 1  & 0 \\
    0 & 0 & 0 & 0 & 1  & 1 \\
    1 & 1 & 0 & 0 & 0  & 0 \\
    0 & 1 & 1 & 0 & 0  & 0 \\
    1 & 0 & 1 & 0 & 0  & 0
  \end{array}
  \right),
    \end{equation*} 
and its spectrum is $\spec \Q_\pi=\{1^2,-1^2,2,-2\}$.
Moreover, we obtain $\spec F_2^1(S_3)=\{1^2,-1^2,$
$2,-2\}=\spec \Q_\pi$ and 
\begin{equation*}
    \spec \Q_\pi\subset\spec F^{1}_{2\times 1}(S_3)=\{0^2, 1^2,-1^2,\sqrt{3}^2,-\sqrt{3}^2,2,-2\}.
\end{equation*}
\end{example}

\begin{figure}   
    \begin{center}
    \begin{tikzpicture}[scale=1.1,auto,swap]
    \vertex (01) at (0,0) [label=above:{{\footnotesize$(0,1)$}}]{};
    \vertex (10) at (1,0) [label=above:{{\footnotesize $(1,0)$}}]{};
    \vertex (02) at (2.5,0) [label=above:{{\footnotesize$(0,2)$}}]{};
    \vertex (20) at (3.5,0) [label=above:{{\footnotesize $(2,0)$}}]{};
    \vertex (03) at (5,0) [label=above:{{\footnotesize$(0,3)$}}]{};
    \vertex (30) at (6,0) [label=above:{{\footnotesize $(3,0)$}}]{};
    \vertex (21) at (0,-1) [label=below:{{\footnotesize$(2,1)$}}]{};
    \vertex (12) at (1,-1) [label=below:{{\footnotesize $(1,2)$}}]{};
    \vertex (32) at (2.5,-1) [label=below:{{\footnotesize$(3,2)$}}]{};
    \vertex (23) at (3.5,-1) [label=below:{{\footnotesize $(2,3)$}}]{};
    \vertex (13) at (5,-1) [label=below:{{\footnotesize$(1,3)$}}]{};
    \vertex (31) at (6,-1) [label=below:{{\footnotesize $(3,1)$}}]{};
    \draw [line width=0.6pt] (01)--(21);
    \draw [line width=0.6pt] (01) .. controls (1,1.4) and (8.5,1.6) .. (31);
    \draw [line width=0.6pt] (10)--(12);
    \draw [line width=0.6pt] (10)--(13);
    \draw [line width=0.6pt] (02)--(12);
    \draw [line width=0.6pt] (02)--(32);
    \draw [line width=0.6pt] (20)--(21);
    \draw [line width=0.6pt] (20)--(23);
    \draw [line width=0.6pt] (03)--(13);
    \draw [line width=0.6pt] (03)--(23);
    \draw [line width=0.6pt] (30)--(31);
    \draw [line width=0.6pt] (30)--(32);
    \draw[] (8,0.5) circle (0.3);
    \node () at (8,0.1) [label=above:{{\small $\AA_1$}}]{};
    \draw[] (9.5,0.5) circle (0.3);
    \node () at (9.5,0.1) [label=above:{{\small $\AA_2$}}]{};
    \draw[] (11,0.5) circle (0.3);
    \node () at (11,0.1) [label=above:{{\small $\AA_3$}}]{};
    \draw[] (8,-1.5) circle (0.3);
    \node () at (8,-1.9) [label=above:{{\small $\AA_4$}}]{};
    \draw[] (9.5,-1.5) circle (0.3);
    \node () at (9.5,-1.9) [label=above:{{\small $\AA_5$}}]{};
    \draw[] (11,-1.5) circle (0.3);
    \node () at (11,-1.9) [label=above:{{\small $\AA_6$}}]{};
    \draw (8,0.2) edge [line width=0.6pt] (8,-1.2);
    \draw (9.5,0.2) edge [line width=0.6pt] (9.5,-1.2);
    \draw (11,0.2) edge [line width=0.6pt] (11,-1.2);
    \draw (8,0.2) edge [line width=0.6pt] (11,-1.2);
    \draw (9.5,0.2) edge [line width=0.6pt] (8,-1.2);
    \draw (11,0.2) edge [line width=0.6pt] (9.5,-1.2);
    \node () at (7.8,0.4) [label=below:{\small $1$}]{};
    \node () at (7.8,-0.7) [label=below:{\small $1$}]{};
    \node () at (11.2,0.4) [label=below:{\small $1$}]{};
    \node () at (11.2,-0.7) [label=below:{\small $1$}]{};
    \node () at (8.3,0.15) [label=below:{\small $1$}]{};
    \node () at (8.3,-0.8) [label=below:{\small $1$}]{};
    \node () at (9.1,0.45) [label=below:{\small $1$}]{};
    \node () at (9.3,-0.7) [label=below:{\small $1$}]{};
    \node () at (9.7,0.4) [label=below:{\small $1$}]{};
    \node () at (9.8,-0.8) [label=below:{\small $1$}]{};
    \node () at (10.7,0.05) [label=below:{\small $1$}]{};
    \node () at (10.7,-0.55) [label=below:{\small $1$}]{};
    \end{tikzpicture}
    \caption{The supertoken graph $F^{1}_{2\times 1}(S_3)$ (left) and its quotient graph $F^{1}_{2\times 1}(S_3)/\pi$ $\cong F_{2}^1(S_3)$ (right).}
     \label{fig:example-of-regular-partition}
    \end{center}
\end{figure}

\subsection{The supertoken graphs $F^{1}_{2\times 1}(G)$ of a star}

When the original graph is a star, we have the following result.
 \begin{proposition}
\label{prop:sp-F_2x1^1(S_n)}
The adjacency spectrum of $F_{2\times 1}^1(S_n)$ is
$$ 
\spec F_{2\times 1}^1(S_n)=\left\{\pm \blue{\sqrt{2n-2}}^{\,1},\pm \sqrt{n}^{\,n-1}, 
\pm \sqrt{n-2}^{\,n-1},
0^{\,n^2-3n+2}\right\}.
$$
Moreover, the vertex set of $F_{2\times 1}^1(S_n)$ admits a regular partition with quotient graph $F_{2}^1(S_n)$.
\end{proposition}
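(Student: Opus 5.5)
The plan is to show that $F_{2\times 1}^1(S_n)$ is the subdivision graph of an $(n-1)$-regular graph, and then to apply Theorem~\ref{thm:Cv}, as in the proof of Proposition~\ref{prop:sp-F_2^1(S_n)}.

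\emph{Structure of the graph.} Let $V(S_n)=\{0,1,\ldots,n\}$ with center $0$. The vertices of $F_{2\times 1}^1(S_n)$ are the ordered pairs of distinct vertices, split into three types:
\begin{itemize}
\item the $n$ vertices $a_i=(0,i)$;
\item the $n$ vertices $b_i=(i,0)$;
\item the $n(n-1)$ vertices $(i,j)$ with $i\neq j$ leaves.
\end{itemize}
A move must use an edge of the star, so it always involves the center. Hence the only adjacencies are the following.
\begin{itemize}
\item $(0,j)\sim(i,j)$: move token $1$ between $0$ and leaf $i\neq j$.
\item $(i,0)\sim(i,j)$: move token $2$ between $0$ and leaf $j\neq i$.
\end{itemize}
So each vertex $(i,j)$ has exactly two neighbours, $a_j$ and $b_i$. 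There are no edges among the $a$'s and $b$'s, and no edges among the pairs of leaves.

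\emph{Identifying $H$.} It follows that $F_{2\times 1}^1(S_n)\cong S(H)$, where $H$ is the graph on $\{a_1,\ldots,a_n,b_1,\ldots,b_n\}$ with $a_j\sim b_i$ if and only if $i\neq j$. Each pair $\{a_j,b_i\}$ with $i\ne j$ comes from exactly one vertex $(i,j)$, so $H$ is simple. Thus $H=K_{n,n}-nK_2$, the crown graph, which is $(n-1)$-regular with $2n$ vertices and $n(n-1)$ edges.

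\emph{Spectrum of $H$.} Its adjacency matrix is
$$\begin{pmatrix}0&J-I\\ J-I&0\end{pmatrix}.$$
Its eigenvalues are $\pm\mu$ for $\mu\in\spec(J-I)=\{(n-1)^1,(-1)^{n-1}\}$, which gives
$$\spec H=\{\pm(n-1)^1,\pm1^{\,n-1}\}.$$

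\emph{Applying Theorem~\ref{thm:Cv}.} With $r=n-1$ we get
$$P_{F_{2\times1}^1(S_n)}(z)=z^{n(n-1)-2n}\,P_H\big(z^2-n+1\big).$$
Each root $\lambda$ of $P_H$ contributes the roots $z=\pm\sqrt{\lambda+n-1}$.
\begin{itemize}
\item $\lambda=n-1$ gives $\pm\sqrt{2n-2}$, each simple.
\item $\lambda=1$ gives $\pm\sqrt n$, each with multiplicity $n-1$.
\item $\lambda=-1$ gives $\pm\sqrt{n-2}$, each with multiplicity $n-1$.
\item $\lambda=-(n-1)$ gives the root $0$ with multiplicity $2$.
\end{itemize}
Together with the factor $z^{n^2-3n}$, the total multiplicity of $0$ is $n^2-3n+2$. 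As a check, the multiplicities sum to $n(n+1)$, the order of the graph. For small $n$ some of these eigenvalues coincide with $0$, and the multiplicities simply add.

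\emph{Regular partition.} The regular partition with quotient graph $F_2^1(S_n)$ is the case $k=2$ of Proposition~\ref{prop:regularF1_1111}. By Proposition~\ref{propo:eigenvals F_{kx1}^1(G)}, this is consistent with $\spec F_2(S_n)\subset\spec F_{2\times1}^1(S_n)$, which follows from Proposition~\ref{prop:sp-F_2^1(S_n)}.

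The only real obstacle is verifying the adjacency structure: that the leaf–leaf pairs have degree exactly $2$ and that $H$ has no multiple edges, so that Cvetković's formula applies. Everything after that is direct computation.
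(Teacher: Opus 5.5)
Your proof is correct, but it follows a different route from the paper's.

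\textbf{The paper's route.} The paper does not identify the graph as a subdivision. It hangs $F_{2\times1}^1(S_n)$ from a representative of each vertex type, $0a$ and $ab$. It reads off the distinct eigenvalues $\pm\sqrt{2n-2},\pm\sqrt{n},\pm\sqrt{n-2},0$ from the spectra of the two quotient matrices $\Q_{0a}$ and $\Q_{ab}$. It then recovers the multiplicities from the linear system given by $\tr\A^0$, $\tr\A^2$ and $\tr\A^4$, where the last is computed as $2n(\Q_{0a}^4)_{11}+n(n-1)(\Q_{ab}^4)_{11}$.

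\textbf{Your route.} You observe that every leaf--leaf vertex $(i,j)$ has degree exactly $2$. So the graph is the subdivision of the crown graph $K_{n,n}-nK_2$, and Theorem~\ref{thm:Cv} gives the whole characteristic polynomial at once. This is exactly the paper's own approach for $F_2(S_n)\cong S(K_n)$.

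\textbf{What your route buys.} It is shorter and gives the multiplicities directly. It needs no linear system and no verification of two hanging partitions from figures. It also avoids the paper's implicit reliance on every eigenvalue of $\A$ already occurring in $\spec\Q_{0a}\cup\spec\Q_{ab}$.

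It also explains the second claim structurally. The crown graph is the bipartite double cover of $K_n$, so $S(K_{n,n}-nK_2)$ is a $2$-fold cover of $S(K_n)\cong F_2(S_n)$. Its fibres $\{(0,j),(j,0)\}$ and $\{(i,j),(j,i)\}$ are exactly the classes of Proposition~\ref{prop:regularF1_1111}.

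\textbf{What the paper's route buys.} It does not depend on a subdivision structure. It is the same machinery the paper needs for $F_2^2(S_n)$, which has pendant vertices and is not a subdivision of a regular graph. As a by-product it also gives antipodality and diameter $6$.

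\textbf{A minor technicality.} For $n=2$ the crown graph is $2K_2$, so $m<n$ in Theorem~\ref{thm:Cv} and the exponent $n^2-3n$ is negative. The formula still holds as an identity, since $P_H(z^2-1)=z^4(z^2-2)^2$, so your count remains valid there too.
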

\begin{figure}[h!]
    \begin{center}
  \includegraphics[width=14cm]{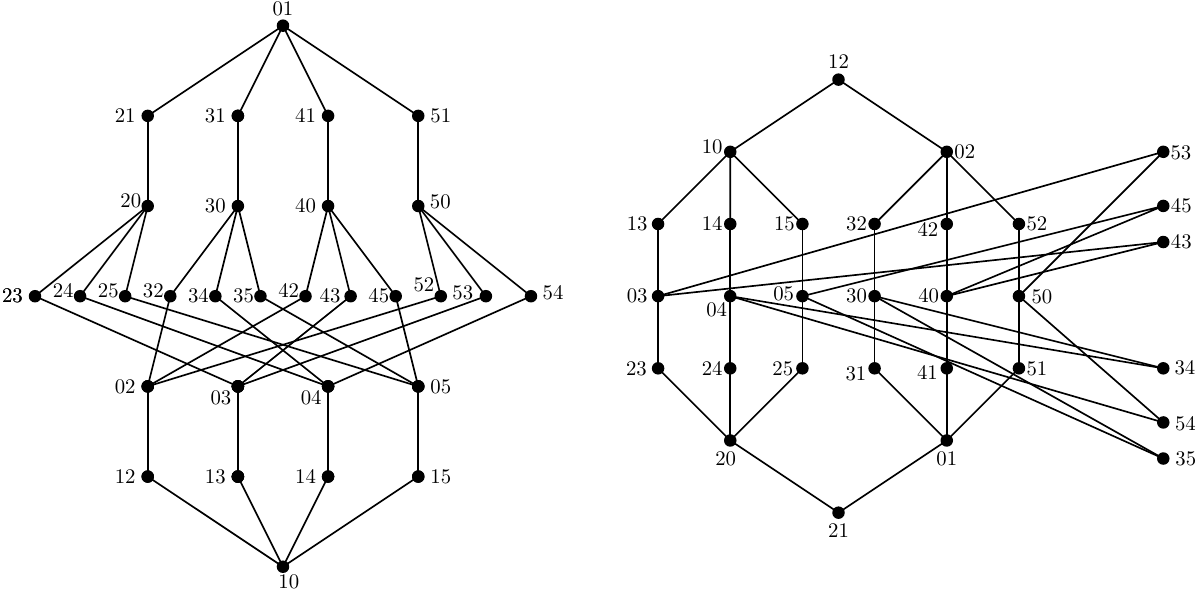}
  \end{center}
  \caption{The supertoken graph $F_{2\times 1}^1(S_5)$ when hanging from the vertices $(0,1)=01$ and $(1,2)=12$, respectively.}
  \label{fig:F_2x1^1(S5)}
  \end{figure}
\begin{figure}[h!]
    \begin{center}
  \includegraphics[width=9cm]{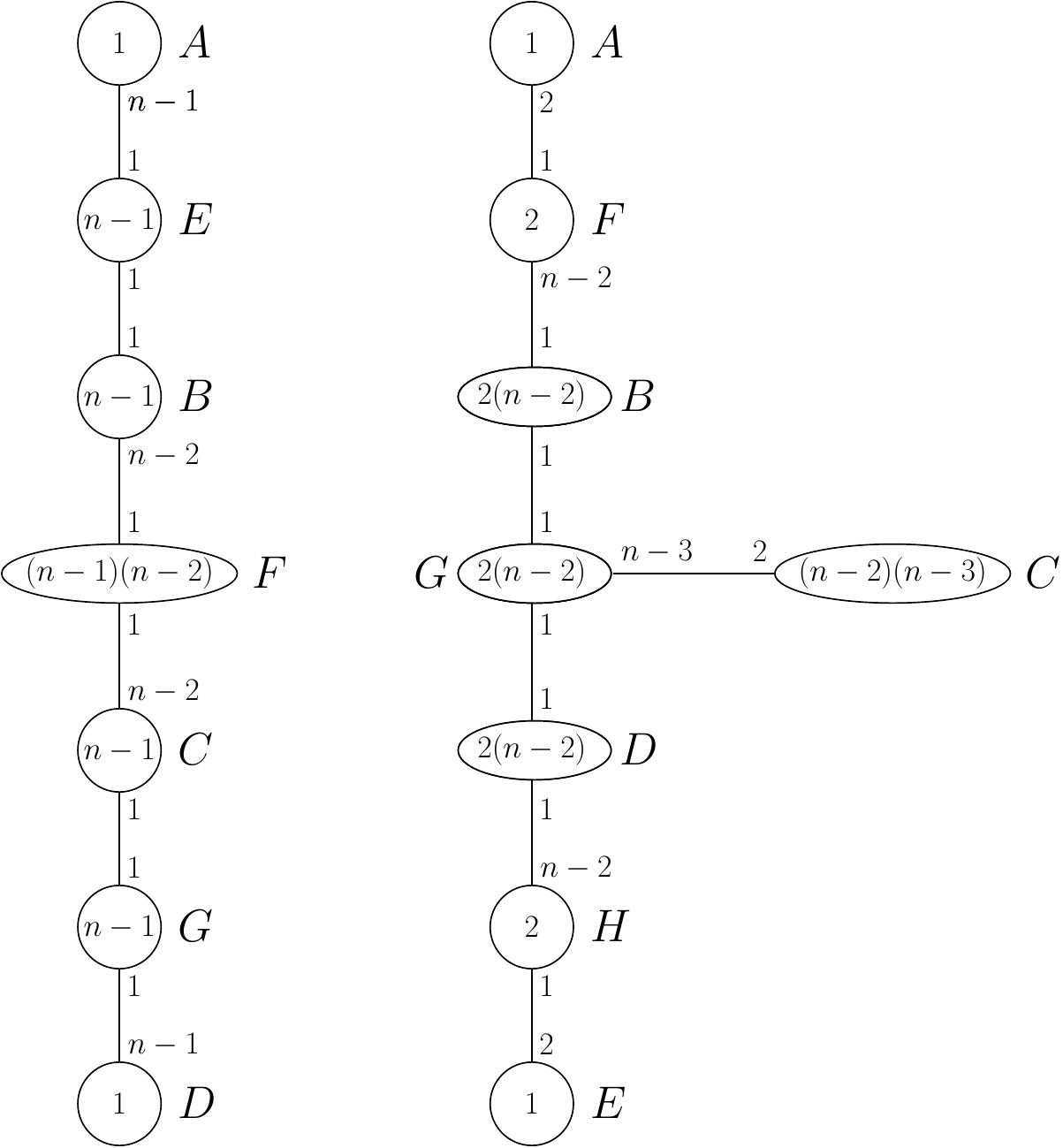}
  \end{center}
  \caption{The quotient diagrams of the regular partitions of the supertoken graph $F_{2\times 1}^1(S_n)$ when hanging from the vertices $(0,a)=0a$ or $(a,0)=a0$, and $(a,b)=ab$, respectively.}
  \label{fig:Quotients}
  \end{figure}
\begin{proof}
The proof goes along the same lines of reasoning as Proposition
\ref{prop:sp-F_2^2(S_n)}.
The graph $F_{2\times 1}^1(S_{n})$ has two classes of vertices. Namely, $n_{0a}=2n$ vertices containing 0 (that is, of the form $(0,a)=0a$ or $(a,0)=a0$), and $n_{ab}=n(n-1)$ vertices of type $(a,b)=ab$, where $a,b\in[n]$ and $a\neq b$. When the graph is hanging from these vertices 
(see Figure \ref{fig:F_2x1^1(S5)} for the case of $F_{2\times 1}^1(S_5)$), 
we get the regular partitions shown in Figure \ref{fig:Quotients}. From them, we see that $F_{2\times 1}^1(S_n)$
is a bipartite antipodal graph with diameter 6. The antipodal vertex of $0a$ is $a0$, and the antipodal vertex of $ab$ is $ba$.
Then, the respective quotient matrices $\Q_{0a}$ and $\Q_{ab}$, indexed by the classes $A,B,C,\ldots$, are:
\begin{equation*} 
    \hskip-.75cm 
    \Q_{0a} = \left( 
  \begin{array}{ccccccc}
    0 & 0 & 0   & 0   & n-1  & 0 & 0     \\
    0 & 0 & 0   & 0   & 1  & n-2 & 0 \\
    0 & 0 & 0   & 0   & 0  & n-2 & 1 \\
    0 & 0 & 0   & 0   & 0  & 0 & n-1 \\
    1 & 1 & 0 & 0   & 0  & 0 & 0\\
    0 & 1 & 1 & 0   & 0  & 0 & 0 \\
    0 & 0 & 1 & 1   & 0 & 0  & 0
  \end{array}
  \right)
  \end{equation*}
  and
  \begin{equation*}
  \Q_{ab} = \left(
  \begin{array}{cccccccc}
    0 & 0 & 0 & 0 & 0   & 2 & 0 & 0      \\
    0 & 0 & 0 & 0 & 0   & 1 & 1 & 0 \\
    0 & 0 & 0 & 0 & 0   & 0 & 2 & 0 \\
    0 & 0 & 0 & 0 & 0   & 0 & 1 & 1 \\
     0 & 0 & 0 & 0 & 0  & 0 & 0 & 2 \\
    1 & n-2 & 0 & 0 & 0 & 0 & 0 & 0\\
    0 & 1 & n-3 & 1 & 0 & 0 & 0 & 0\\
    0 & 0 & 0 & n-2 & 1 & 0 & 0 & 0
  \end{array}
  \right),
    \end{equation*} 
with spectra
\begin{align*}
\spec \Q_{0a}&=\left\{\pm \sqrt{2n-2}^{\,1},\pm \sqrt{n}^{\,1}
,\pm \sqrt{n-2}^{\,1},
0^{\,1}\right\},\\
\spec \Q_{ab}&=\left\{\pm \sqrt{2n-2}^{\,1},\pm \sqrt{n}^{\,1}
,\pm \sqrt{n-2}^{\,1},
0^{\,2}\right\}.
\end{align*}
Then, taking into account that $F_{2\times 1}^1(S_n)$ is bipartite, it must have spectrum
$$
\spec F_{2\times 1}^1(S_n)=\left\{\pm \sqrt{2n-2}^{\,1},\pm \sqrt{n}^{\,m_1},
\pm \sqrt{n-2}^{\,m_2},0^{\, m_3}\right\},
$$
where the multiplicities $m_1$, $m_2$, and $m_3$ satisfy the system
\begin{align}
\tr \A^0 &= 2+2m_1+2m_2+m_3 =n^2+n, \nonumber\\
\tr \A^2 &= 2(2n-2)+2nm_1+2(n-2)m_2=4n^2-4n, \label{system}\\
\tr \A^4 &= 2(2n-2)^2+2n^2m_1+2(n-2)^2m_2=4n^2(n-1),
\nonumber
\end{align}
where $\tr \A^0$ is the number $n^2+n$ of vertices,
$\tr \A^2$ is twice the number $2n^2-2n$ of edges, and  $\tr \A^4$ can be computed by using the numbers of vertices $n_{0a}$ and $n_{ab}$ and the entries $(\Q_{0a}^4)_{11}=n^2-n$ and
$(\Q_{ab}^4)_{11}=2n$, giving $\tr \A^4=n_{0a}(\Q_{0a}^4)_{11}+ n_{ab}(\Q_{ab}^4)_{11}=4n^2(n-1)$. 
Then, the solution of the system \eqref{system} yields $m_1=m_2=n-1$ and $m_3=n^2-3n+2$. This completes the proof.
\end{proof}

Note that, according to Proposition \ref{propo:eigenvals F_{kx1}^1(G)}, all the eigenvalues of $F_2^1(S_n)$ belong to the spectrum of $F^{1}_{2\times 1}(S_n)$.
\section{The supertoken graphs $F^{s}_{k\times 1}(G)$}
\label{sec:F_k^s(G)-colors-dif}

In this case, the $k$ tokens are different, and the maximum number of tokens per vertex is $s$, with $1<s<k$.
For integers $t_1,\ldots,t_i$ satisfying $t_1+\cdots+t_i\le k$, we define $\binom{k}{t_1,\ldots,t_i}=\frac{k!}{t_{1}!\cdots t_{i}!(k-t_{1}-\cdots-t_{i})!}$.

\begin{proposition}[\cite{sdfmz25}]
Let $G=(V,E)$ be a graph on $n$ vertices. The order and size of the supertoken graph $F^{s}_{k\times 1}(G)$ are, respectively,
\begin{eqnarray}\label{eq:nomial2}
|V(F^{s}_{k\times 1}(G))| &= &\sum_{j=0}^{\lfloor\frac{k}{s+1}\rfloor}(-1)^j\binom{n}{j}\sum_{\substack{t_{i_1},t_{i_2},\ldots,t_{i_j}\ge s+1\\ t_{i_1}+t_{i_2}+\cdots+t_{i_j}\le k}}\binom{k}{t_{i_1},\ldots,t_{i_j}}(n-j)^{k-t_{i_1}-\cdots-t_{i_j}},
\end{eqnarray}
and 
\begin{equation}\label{eq:edge-fks2}
|E(F_{k\times 1}^s(G))| = |E|\sum_{r_u=0}^{s-1}\sum_{r_v=0}^{s-1}(r_u+1)\binom{k}{r_u+1,r_v}h(n-2,k-r_u-r_v-1,s).
\end{equation}
\end{proposition}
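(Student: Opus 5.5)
We prove the two formulas separately, by direct counting. Throughout, a vertex of $F^{s}_{k\times 1}(G)$ is a map $\phi:[k]\to V$, where $\phi(t)$ is the vertex occupied by token $t$. The only constraint is $|\phi^{-1}(v)|\le s$ for every $v\in V$. For integers $m$ and $r$, let $h(m,r,s)$ denote the number of ways to place $r$ labeled tokens on $m$ vertices with at most $s$ tokens per vertex. This is exactly the quantity in \eqref{eq:nomial2} with $(n,k)$ replaced by $(m,r)$, and we take it to be the function $h$ appearing in \eqref{eq:edge-fks2}.

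\emph{Order.} We use inclusion--exclusion over the set of ``overloaded'' vertices. For $W\subseteq V$, let $N_W$ be the number of maps $\phi:[k]\to V$ in which every vertex of $W$ receives at least $s+1$ tokens, with no condition on the other vertices. Then
$$|V(F^{s}_{k\times 1}(G))|=\sum_{W}(-1)^{|W|}N_W.$$
Fix $W=\{i_1,\ldots,i_j\}$. We prescribe exact loads $t_{i_1},\ldots,t_{i_j}\ge s+1$ on the vertices of $W$. The number of ways to choose which tokens go to each vertex of $W$ is $\binom{k}{t_{i_1},\ldots,t_{i_j}}$. The remaining $k-\sum t_{i_l}$ tokens must avoid $W$ but are otherwise free, which gives the factor $(n-j)^{k-\sum t_{i_l}}$. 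Summing over all admissible loads gives $N_W$. Since $N_W$ depends only on $j=|W|$, summing over the $\binom{n}{j}$ subsets of size $j$ gives the factor $\binom nj$. Finally, $W$ can only contribute if $j(s+1)\le k$, so $j\le\lfloor k/(s+1)\rfloor$. This yields \eqref{eq:nomial2}.

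\emph{Size.} Fix an orientation $(u,v)$ of each edge $uv\in E$. Every edge of $F^{s}_{k\times 1}(G)$ joins configurations $B$ and $C$ that differ in the position of exactly one token, and that token sits on $u$ in one of them and on $v$ in the other, for a unique edge $uv\in E$. We orient the supertoken edge from the configuration in which the moving token is on $u$. This assigns each supertoken edge to exactly one pair (edge $uv$, configuration $B$ with a token on $u$ that is moved to $v$), so there is no double counting.

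Now count these pairs for a fixed oriented edge $(u,v)$. Suppose $B$ has $r_u+1$ tokens on $u$ and $r_v$ tokens on $v$.
\begin{itemize}
\item For $B$ to be feasible we need $r_u+1\le s$; for the resulting configuration to be feasible we need $r_v+1\le s$. Hence $0\le r_u,r_v\le s-1$.
\item The tokens on $u$ and on $v$ can be chosen in $\binom{k}{r_u+1,r_v}$ ways.
\item The moving token is one of the $r_u+1$ tokens on $u$.
\item The remaining $k-r_u-r_v-1$ tokens are placed on the other $n-2$ vertices with cap $s$, in $h(n-2,k-r_u-r_v-1,s)$ ways.
\end{itemize}
Multiplying these factors, summing over $r_u$ and $r_v$, and multiplying by $|E|$ gives \eqref{eq:edge-fks2}.

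The step that needs the most care is the bijection between supertoken edges and pairs (oriented edge, token move), in particular that no supertoken edge is counted twice. This holds because the reverse move $v\to u$ from $C$ gives the same supertoken edge, but it is not counted, since only the direction $u\to v$ of the fixed orientation is used.
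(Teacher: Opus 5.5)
Your proof is correct. The paper does not prove this proposition; it only quotes it from Song, Dalf\'o, Fiol, Mora, and Zhang, so there is no in-paper argument to compare with. Both formulas do, however, have exactly the shape your two counts produce. Inclusion--exclusion over the set of vertices carrying at least $s+1$ tokens gives the order. Counting moves of one token from $u$ to $v$ across a fixed orientation of each edge, with $r_u+1$ tokens on $u$ and $r_v$ on $v$ before the move, gives the size term by term.

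Two points are worth making explicit in a write-up.

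First, the paper never defines $h$. Your reading of $h(m,r,s)$ as the number of placements of $r$ labeled tokens on $m$ vertices, with at most $s$ per vertex, is the one the formula requires. It also agrees with the stated convention $h(n,0,s)=1$.

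Second, when $2s-1>k$, some terms have $r_u+r_v+1>k$. There $\binom{k}{r_u+1,r_v}$ (which the paper defines only for $t_1+\cdots+t_i\le k$) and $h(n-2,k-r_u-r_v-1,s)$ are undefined. These terms must be read as $0$, which is what your count (``zero ways'') gives.

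As a sanity check, apply your formula to $S_n$ (with $n+1$ vertices and $n$ edges), $k=3$, $s=2$. It gives $n\bigl(3(n-1)^2+12(n-1)+6\bigr)=3n^3+6n^2-3n$, which is the edge count of $F^2_{3\times 1}(S_n)$ stated later in the paper.
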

Specifically, for $j=0$, we define the second sum $\sum_{\substack{t_{i_1},t_{i_2},\ldots,t_{i_j}\ge s+1\\ t_{i_1}+t_{i_2}+\cdots+t_{i_j}\le k}}\binom{k}{t_{i_1},\ldots,t_{i_j}}$ to be equal to $1$ in (\ref{eq:nomial2}), and for $r_u+r_v=k-1$, we define $h(n,0,s)$ to be $1$ in (\ref{eq:edge-fks2}).

Song, Dalf\'o, Fiol, Mora, and Zhang \cite{sdfmz25} showed that 
$F_{k\times 1}^k(G)$ is isomorphic to the Cartesian product $G\Box\stackrel{(k)}{\cdots}\Box G$.
Thus, every eigenvalue of $F_{k\times 1}^k(G)$ is obtained by adding $k$ eigenvalues (not necessarily distinct) of $G$.

Let us now consider the case of $F_{2\times 1}^{2}(S_n)$. This is a bipartite graph with $n^2+2n+1$ vertices, $2n^2+2n$ edges, and diameter 4. For example, since the star $S_n$
has spectrum $\spec S_{n}=\{ \pm\sqrt{n}^1,0^{n-1}\}$
the adjacency spectrum of $F_{2\times 1}^2(S_n)\cong S_n\Box S_n$ is
$$ 
\spec F_{2\times 1}^2(S_n)=\left\{\pm 2\sqrt{n}^{\,1},\pm \sqrt{n}^{\,2n-2}, 
0^{\,n^2-2n+3}\right\}.
$$

\subsection{The supertoken graphs $F^{2}_{3\times 1}(G)$ of a star}

Concerning the supertoken $F_{3\times1}^2(S_n)$, Figure \ref{fig:supertoken(maco)} shows this graph for $n=2$ with 24 vertices and 42 edges. 
In general, the graph $F_{3\times1}^2(S_n)$ has $n^3+3n^2+2n$ vertices and $3n^3+6n^2-3n$ edges. 
\begin{proposition}
\label{propo:F_3x1^2(S_n)}
 As a bipartite graph, the spectrum of $F_{3\times1}^2(S_n)$ contains the eigenvalues
 \begin{equation}
 \begin{split}
 0,\quad \rho=\frac12\sqrt{-6 + 6\sqrt{4n^2 + 4n - 7} + 16n},\quad -\frac12\sqrt{-6 + 6\sqrt{4n^2 + 4n - 7} + 16n}, \\
 \frac12\sqrt{-6 - 6\sqrt{4n^2 + 4n - 7} + 16n},\quad -\frac12\sqrt{-6 - 6\sqrt{4n^2 + 4n - 7} + 16n},
\end{split}
\label{eigenvals:supertoken}
\end{equation}
where $\rho$ is the spectral radius.
\end{proposition}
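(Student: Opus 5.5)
The plan is to exhibit a regular partition of $F_{3\times1}^2(S_n)$ into the five ``types'' of configurations obtained by forgetting the token labels, compute its $5\times 5$ quotient matrix $\Q_\pi$, and use that the eigenvalues of a quotient matrix of a regular partition are eigenvalues of the graph. Let $0$ be the center of $S_n$ and let the tokens be $x,y,z$. The classes are:
\begin{itemize}
\item $T_1$: three tokens on distinct leaves, $|T_1|=n(n-1)(n-2)$;
\item $T_2$: one token at $0$ and two on distinct leaves, $|T_2|=3n(n-1)$;
\item $T_3$: two tokens at $0$ and one on a leaf, $|T_3|=3n$;
\item $T_4$: one token at $0$ and two on the same leaf, $|T_4|=3n$;
\item $T_5$: two tokens on one leaf and one on another leaf, $|T_5|=3n(n-1)$.
\end{itemize}
These sizes add up to $n^3+3n^2+2n$, as they should.

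Regularity follows by counting the legal moves from a fixed configuration. Unlike the unlabeled case of Proposition~\ref{propo:F_3^2(S_n)}, each choice of \emph{which} token moves now yields a distinct neighbor.
\begin{itemize}
\item From $T_1$: each of the $3$ tokens can move to $0$, giving $3$ neighbors in $T_2$.
\item From $T_2$: the central token can go to one of the $n-2$ empty leaves ($T_1$) or to one of the $2$ occupied leaves ($T_5$); each of the $2$ leaf tokens can go to $0$ ($T_3$).
\item From $T_3$: either of the $2$ central tokens can go to one of the $n-1$ empty leaves, giving $2(n-1)$ neighbors in $T_2$, or to the occupied leaf, giving $2$ neighbors in $T_4$.
\item From $T_4$: the central token can go to one of the $n-1$ empty leaves ($T_5$); each of the $2$ leaf tokens can go to $0$ ($T_3$).
\item From $T_5$: the single leaf token can go to $0$, giving $1$ neighbor in $T_4$; each of the $2$ doubled tokens can go to $0$, giving $2$ neighbors in $T_2$.
\end{itemize}
These counts depend only on the class, so $\pi=(T_1,\dots,T_5)$ is regular.

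The quotient is bipartite, with sides $\{T_1,T_3,T_5\}$ and $\{T_2,T_4\}$. Hence
$$
\Q_\pi=\begin{pmatrix}0&B\\ C&0\end{pmatrix},\qquad
B=\begin{pmatrix}3&0\\ 2(n-1)&2\\ 2&1\end{pmatrix},\qquad
C=\begin{pmatrix}n-2&2&2\\ 0&2&n-1\end{pmatrix}.
$$
The nonzero eigenvalues of $\Q_\pi$ are $\pm\sqrt{\mu}$, where $\mu$ runs over the eigenvalues of
$$
CB=\begin{pmatrix}7n-6&6\\ 6n-6&n+3\end{pmatrix}.
$$
This matrix has trace $8n-3$ and determinant $7n^2-21n+18$. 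The discriminant of its characteristic polynomial is $9(4n^2+4n-7)$, so
$$
\mu=\tfrac12\left(8n-3\pm3\sqrt{4n^2+4n-7}\right),
$$
and the values $\pm\sqrt{\mu}$ are exactly the four nonzero numbers in \eqref{eigenvals:supertoken}. Since $BC$ is a $3\times3$ matrix of rank at most $2$, $\Q_\pi$ also has the eigenvalue $0$. All five values therefore lie in $\spec F_{3\times1}^2(S_n)$.

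For the claim about $\rho$, use the quotient eigenvector belonging to the largest root $\mu$. Lifting it to be constant on each class gives an eigenvector of $F_{3\times1}^2(S_n)$, and it can be chosen with all entries positive. The graph is connected, so by Perron--Frobenius an eigenvalue with a positive eigenvector is the spectral radius. The main care needed is the neighbor counting with labeled tokens, where the multiplicities $2$ and $2(n-1)$ are easy to get wrong. The remaining checks are that $\mu_-\ge0$, so that all listed values are real, and that the lifted eigenvector is positive: both follow because $CB$ is a positive matrix with positive determinant for $n\ge2$.
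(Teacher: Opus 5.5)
Your route is the same as the paper's: the five-class regular partition by token pattern, the quotient eigenvalues lifted to the graph, and Perron--Frobenius for $\rho$. Your $\Q_\pi$ is the paper's quotient matrix with $T_3$ and $T_5$ listed in the opposite order. Your class sizes, neighbour counts and the computation through $CB$ (trace $8n-3$, determinant $7n^2-21n+18$, discriminant $9(4n^2+4n-7)$) are all correct. They make explicit what the paper leaves to a figure, and for $n\ge 3$ the argument is complete.

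The one step that does not go through is the eigenvalue $0$ when $n=2$.
\begin{itemize}
\item At $n=2$ we have $T_1=\emptyset$. The $T_1$-column of $\Q_\pi$ is then zero, since its only possibly nonzero entry is $n-2$.
\item Since $\det(CB)=4\neq 0$, $\ker\Q_\pi$ is spanned by the indicator vector of $T_1$. Its lift to the graph is the zero vector, so $0$ is not obtained this way.
\item The nonzero eigenvalues and the positive Perron vector still lift correctly.
\item For $n\ge 3$ there is no issue: the kernel vector of $\Q_\pi$ is nonzero on $T_3\cup T_5$. Alternatively, $0$ follows from the bipartition imbalance $|T_1\cup T_3\cup T_5|-|T_2\cup T_4|=n(n-1)(n-2)>0$.
\end{itemize}

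For $n=2$ you need a separate witness. One choice is the vector $f$ defined by:
\begin{itemize}
\item $f=+1$ or $-1$ on a vertex of $T_3$, according as its leaf token sits on leaf $1$ or leaf $2$;
\item $f=+2$ or $-2$ on a vertex of $T_5$, according as its lone token sits on leaf $1$ or leaf $2$;
\item $f=0$ on $T_2\cup T_4$.
\end{itemize}
Every vertex of $T_4$ sees the values $1,1,-2$ or $-1,-1,2$ among its neighbours. Every vertex of $T_2$ sees $1,-1,2,-2$. Hence $f\in\ker\A$.

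The paper's proof has the same blind spot. It only asserts that for $n=2$ the quotient matrix still gives the correct eigenvalues, which the quotient argument does not justify for $0$.
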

\begin{proof}
In the case of $n=2$, there are four classes of vertices, whereas for $n\ge 3$, there are five classes of vertices, as illustrated in the quotient diagram of Figure \ref{fig:Quotient2} (Notice that, for $n=2$ the `class $A$' has zero vertices and the quotient matrix also gives the correct eigenvalues).
\begin{figure}[t]
    \begin{center}
  \includegraphics[width=8cm]{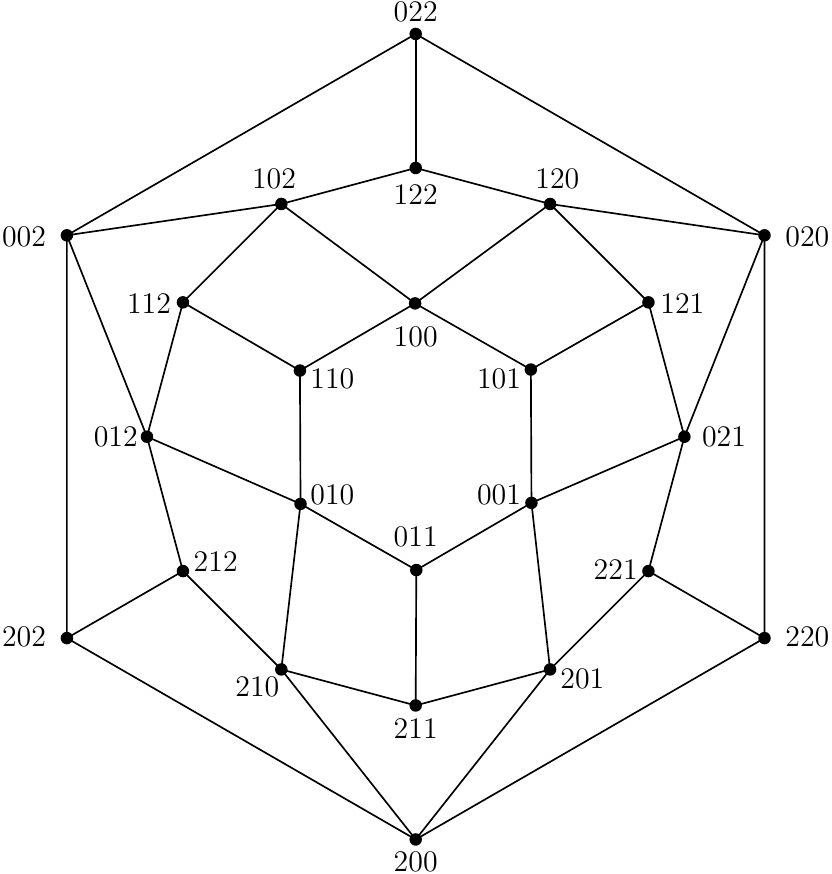}
  \end{center}
  \caption{The supertoken graph $F_{3\times 1}^2(S_2)$ as a planar graph with spectral radius $\rho\approx 3.5615$.}
  \label{fig:supertoken(maco)}
  \end{figure}
\begin{figure}[h!]
    \begin{center}
  \includegraphics[width=9cm]{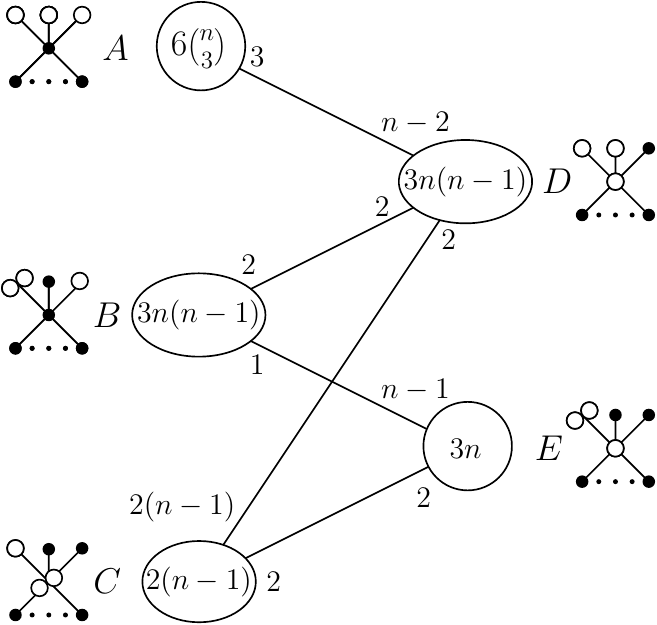}
  \end{center}
  \caption{The quotient diagram of the regular partition of the supertoken graph $F_{3\times 1}^2(S_n)$.}
  \label{fig:Quotient2}
  \end{figure}
Then, the quotient matrix is
$$
\Q= \left( 
   \begin{array}{ccccc}
     0 & 0 & 0   &  3  & 0  \\
     0 & 0 & 0   & 2   & 1  \\
     0 & 0 & 0   & 2(n-1)  & 2 \\
     n-2 & 2 & 2 & 0   & 0 \\
     0 & n-1 & 2   & 0 & 0  
   \end{array}
   \right),\
$$
with the eigenvalues claimed in \eqref{eigenvals:supertoken}. Moreover, the eigenvalue $\rho$ has a positive eigenvector, so that, by the Perron-Frobenius theorem, it corresponds to the spectral radius.
\end{proof}
For instance, for the case $n=2$, we obtain
\begin{align*}
\spec \Q &= \left\{\pm\frac{1} {2}(\sqrt{17}+3),
\pm\frac{1} {2}(\sqrt{17}-3),0
\right\}\\
&\subset \
\spec F_{3\times 1}^{2}(S_2)
=\left\{\pm\frac{1} {2}(\sqrt{17}+3), \pm2\sqrt{2}^2,\pm\sqrt{5}, \pm\sqrt{2}^4
\pm \frac{1}{2}(\sqrt{17}-3),0^6\right\},
\end{align*}
whereas, for the case $n=3$, we get
\begin{align*}
\spec \Q &= \left\{\pm 4.4838,
\pm 0.9462,0
\right\}\\
&\subset \
\spec F_{3\times 1}^{2}(S_3)
=\left\{\pm 4.4838^1,\pm 3.4641^4,\pm 3.2004^2,\pm 1.7320^{11},\pm 1.3256^2,\pm0.9462^1, 0^{18}\right\}.
\end{align*}
Notice that, in both cases, Proposition \ref{propo:F_3x1^2(S_n)} also gives the eigenvalues different from 0 with minimum absolute value.

\section{Conclusions}
In this paper, we investigate the spectra of supertoken graphs. More precisely, the existence of regular partitions has enabled us to give, either
the complete (or partial) spectrum of a supertoken graph of a star $S_n$.
In the first case, the whole spectrum has been given for the (super)token graphs $F_2(S_n)$ (Proposition \ref{prop:sp-F_2^1(S_n)}), $F_k(S_{2k-1})$ (Proposition \ref{propo:F_k(S_2k-1}), $F_2^2(S_n)$ (Proposition \ref{prop:sp-F_2^2(S_n)}), and $F_{2\times 1}^1(S_n)$
(Proposition \ref{prop:sp-F_2x1^1(S_n)}). 
In the second case, we provided some eigenvalues (such as the spectral radius) of $F_3^2(S_n)$
(Proposition \ref{propo:F_3^2(S_n)}) and $F_{3\times 1}^2(S_n)$ 
(Proposition \ref{propo:F_3x1^2(S_n)}).
Finally, Proposition
\ref{prop:regularF1_1111}
proves that $F_{k\times 1}^1(G)$ has a regular partition given by $F_k^1(G)$, so that all the eigenvalues of  $F_k^1(G)$ are also eigenvalues of $F_{k\times 1}^1(G)$ (Proposition \ref{propo:eigenvals F_{kx1}^1(G)}).

\end{document}